\newdimen\unit\newdimen\psep\newcount\nd\newcount\ndx\newbox\dotb\newbox\ptbox
\newdimen\dx\newdimen\dy\newdimen\dxx\newdimen\dyy\newdimen\hgt
\newdimen\xoff\newdimen\yoff
\newcommand\clap[1]{\hbox to 0pt{\hss{#1}\hss}}
\newcommand\vdisk[1]{{\font\dotf=cmr10 scaled #1\dotf.}}
\newcommand\varline[2]{\setbox\dotb\hbox{\vdisk{#1}}\xoff=-.5\wd\dotb
\wd\dotb=0pt\yoff=-.5\ht\dotb\psep=#2\ht\dotb}
\newcommand\varpt[1]{\setbox\ptbox\clap{\vdisk{#1}}\setbox\ptbox
\hbox{\raise-.5\ht\ptbox\box\ptbox}}
\newcommand\cpt{\copy\ptbox}
\newcommand\point[3]{\rlap{\kern#1\unit\raise#2\unit\hbox{#3}}}
\newcommand\setnd[4]{\dx=#3\unit\advance\dx-#1\unit\divide\dx by\psep
\dy=#4\unit\advance\dy-#2\unit\divide\dy by\psep \multiply\dx
by\dx\multiply\dy by\dy\advance\dx\dy\nd=1\advance\dx-1sp
\loop\ifnum\dx>0\advance\dx-\nd sp\advance\nd1\advance\dx-\nd
sp\repeat}
\newcommand\dl[4]{{\setnd{#1}{#2}{#3}{#4}\dline{#1}{#2}{#3}{#4}\nd}}
\newcommand\dline[5]{{\nd=#5\hgt=#2\unit\dx=#3\unit\advance\dx-#1\unit
\divide\dx by\nd\dy=#4\unit\advance\dy-#2\unit\divide\dy by\nd
\advance\hgt\yoff\rlap{\kern#1\unit\kern\xoff\loop\ifnum\nd>1\advance\nd-1
\advance\hgt\dy\kern\dx\raise\hgt\copy\dotb\repeat}}}
\newcommand\ellipse[4]{\qellip{#1}{#2}{#3}{#4}\qellip{#1}{#2}{#3}{-#4}%
\qellip{#1}{#2}{-#3}{#4}\qellip{#1}{#2}{-#3}{-#4}}
\newcommand\qellip[4]{{\setnd{0}{0}{#3}{#4}\dx=\unit\dy=0pt\raise\yoff\rlap{%
\kern#1\unit\kern\xoff\raise#2\unit\hbox{\loop\ifnum\dx>0\rlap{\kern#3\dx
\raise#4\dy\copy\dotb}\hgt=\dx\divide\hgt
by\nd\advance\dy\hgt\hgt=\dy \divide\hgt
by\nd\advance\dx-\hgt\repeat\rlap{\raise#4\dy\copy\dotb}}}}}
\newcommand\bez[6]{{\setnd{#1}{#2}{#3}{#4}\ndx=\nd\setnd{#3}{#4}{#5}{#6}
\ifnum\ndx>\nd\nd=\ndx\fi\dx=#3\unit\advance\dx-#1\unit\dy=#4\unit
\advance\dy-#2\unit\dxx=#5\unit\advance\dxx-#1\unit\dyy=#6\unit\advance
\dyy-#2\unit\advance\dxx-2\dx\advance\dyy-2\dy\divide\dxx
by\nd\divide\dyy
by\nd\advance\dx.25\dxx\advance\dy.25\dyy\divide\dx
by\nd\divide\dy by\nd \multiply\nd
by2\dx=100\dx\dy=100\dy\dxx=100\dxx\dyy=100\dyy\divide\dxx by\nd
\divide\dyy
by\nd\hgt=#2\unit\raise\yoff\rlap{\kern#1\unit\kern\xoff
\raise\hgt\copy\dotb\loop\ifnum\nd>0\advance\nd-1\advance\hgt0.01\dy
\kern0.01\dx\raise\hgt\copy\dotb\advance\dx\dxx\advance\dy\dyy\repeat}}}
\newcommand\ptu[3]{\point{#1}{#2}{\cpt\raise1ex\clap{$\scriptstyle{#3}$}}}
\newcommand\ptd[3]{\point{#1}{#2}{\cpt\raise-1.8ex\clap{$\scriptstyle{#3}$}}}
\newcommand\ptr[3]{\point{#1}{#2}{\cpt\raise-.4ex\rlap{$\ \scriptstyle{#3}$}}}
\newcommand\ptl[3]{\point{#1}{#2}{\cpt\raise-.4ex\llap{$\scriptstyle{#3}\ $}}}
\newcommand\ptlu[3]{\point{#1}{#2}{\raise.8ex\clap{$\scriptstyle{#3}$}}}
\newcommand\ptld[3]{\point{#1}{#2}{\raise-1.6ex\clap{$\scriptstyle{#3}$}}}
\newcommand\ptlr[3]{\point{#1}{#2}{\raise-.4ex\rlap{$\,\scriptstyle{#3}$}}}
\newcommand\ptll[3]{\point{#1}{#2}{\raise-.4ex\llap{$\scriptstyle{#3}\,$}}}
\newcommand\pt[2]{\point{#1}{#2}{\cpt}}
\newcommand\thnline{\varline{400}{.6}}
\newcommand\dotline{\varline{1000}{4}}
\title{Monochromatic Clique Decompositions of Graphs}
\date{}
\author{Henry Liu\\
\small{Centro de Matem\'atica e Aplica\c c\~oes} \\
\small{Faculdade de Ci\^encias e Tecnologia, Universidade Nova de Lisboa}\\
\small{Campus de Caparica, 2829-516 Caparica, Portugal}\\
\small{\texttt{h.liu@fct.unl.pt}}
\and Oleg Pikhurko\\
\small{Mathematics Institute and DIMAP}\\
\small{University of Warwick}\\
\small{Coventry CV4 7AL, United Kingdom}\\
\small{\texttt{http://homepages.warwick.ac.uk/staff/O.Pikhurko}}
\and Teresa Sousa\\
\small{Departamento de Matem\'atica and Centro de Matem\'atica e Aplica\c c\~oes} \\ 
\small{Faculdade de Ci\^encias e Tecnologia, Universidade Nova de Lisboa}\\
\small{Campus de Caparica, 2829-516 Caparica, Portugal}\\
\small{\texttt{tmjs@fct.unl.pt}}}
\begin{document}
\newtheorem{df}{Definition}[section]
\newtheorem{thm}[df]{Theorem}
\newtheorem{lm}[df]{Lemma}
\newtheorem{prop}[df]{Proposition}
\newtheorem{conj}[df]{Conjecture}
\newtheorem{cor}[df]{Corollary}
\newtheorem{clm}[df]{Claim}
\newtheorem*{clm1}{Claim 1}
\newtheorem*{clm2}{Claim 2}
\renewcommand{\thedf}{\thesection.\arabic{df}}
\renewcommand{\thethm}{\thesection.\arabic{thm}}
\renewcommand{\theprop}{\thesection.\arabic{prop}}
\renewcommand{\thelm}{\thesection.\arabic{lm}}
\newcommand{\ex}{\mathrm{ex}}
\newcommand{\pf}{\textit{Proof: }}
\numberwithin{equation}{section} 

\def\dcup{\,\dot\cup\,}
\def\eps{\varepsilon}

\maketitle
\renewcommand{\baselinestretch}{1.1}

\begin{abstract}
Let $G$ be a graph whose edges are coloured with $k$ colours, and $\mathcal H=(H_1,\dots , H_k)$ be a $k$-tuple of graphs. A \emph{monochromatic $\mathcal H$-decomposition} of $G$ is a partition of the edge set of $G$ such that each part is either a single edge or forms a monochromatic copy of $H_i$ in colour $i$, for some $1\le i\le k$. Let $\phi_{k}(n,\mathcal H)$ be the smallest number $\phi$, such that, for every
order-$n$ graph and every $k$-edge-colouring, there is a monochromatic $\mathcal H$-decomposition with at most $\phi$ elements. Extending the previous results of Liu and Sousa [``Monochromatic $K_r$-decompositions of graphs", \emph{Journal of Graph Theory}, 76:89--100, 2014], we solve this problem 
when each graph in $\mathcal H$ is a clique and $n\ge n_0(\mathcal H)$ is sufficiently large.\\

\noindent Keywords: Monochromatic graph decomposition;  Tur\'an Number; Ramsey Number
\end{abstract}

\section{Introduction}\label{intro}

All graphs in this paper are finite, undirected and simple. For standard graph-theoretic terminology the reader is referred to \cite{BB98}.

Given two graphs $G$ and $H$, an \emph{$H$-decomposition} of $G$ is a partition of the edge set of $G$ such that each part is either a single edge or forms a subgraph isomorphic to $H$. Let $\phi(G,H)$ be the smallest possible number of parts in an $H$-decomposition of $G$. It is easy to see that, if $H$ is non-empty, we have $\phi(G,H)=e(G)-\nu_H(G)(e(H)-1)$, where $\nu_H(G)$ is the maximum number of pairwise edge-disjoint copies of $H$ that can be packed into $G$. Dor and Tarsi~\cite{DT97} showed that if $H$ has a component with at least 3 edges then it is NP-complete to determine if a graph $G$ admits a partition into copies of $H$. Thus, it is NP-hard to compute the function $\phi(G,H)$ for such $H$. Nonetheless, many exact results 
were proved about the extremal function 
 $$
 \phi(n,H)=\max\{\phi(G,H)\mid v(G)=n\},
 $$
which is the smallest number such that any graph $G$ of order $n$ admits an $H$-decomposition with at most $\phi(n,H)$ elements.

This function was first studied, in 1966, by Erd\H os, Goodman and P\'osa~\cite{EGP66}, who proved that $\phi(n, K_3)=t_2(n)$, where $K_s$ denotes the complete graph  (clique) of order $s$, and $t_{r-1}(n)$ denotes the number of edges in the \emph{Tur\'an graph} $T_{r-1}(n)$, which is the unique $(r-1)$-partite graph on $n$ vertices that has the maximum number of edges. A decade later,  Bollob\'as~\cite{BB76} proved that $\phi(n,K_r)=t_{r-1}(n)$, for all $n\ge r\ge 3$.

Recently Pikhurko and Sousa \cite{PS07} studied $\phi(n,H)$ for arbitrary graphs $H$. Their result is the following. 

\begin{thm}\label{PSthm}\textup{\cite{PS07}}
Let $H$ be any fixed graph of chromatic number $r\ge 3$. Then,
$$\phi(n,H)=t_{r-1}(n)+o(n^2).$$
\end{thm}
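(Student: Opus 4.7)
The plan is to establish matching upper and lower bounds. The lower bound $\phi(n,H) \ge t_{r-1}(n)$ is immediate from taking $G = T_{r-1}(n)$: because $\chi(H) = r$, the Tur\'an graph is $H$-free, so $\nu_H(G) = 0$ and $\phi(G,H) = e(G) = t_{r-1}(n)$. The substantive task is the upper bound. Using the identity $\phi(G,H) = e(G) - \nu_H(G)(e(H)-1)$, it suffices to show that every $n$-vertex graph $G$ contains an edge-disjoint family of at least $(e(G) - t_{r-1}(n) - o(n^2))/(e(H)-1)$ copies of $H$.

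The natural tool is Szemer\'edi's Regularity Lemma. For any $\eta>0$, choose $\eps, d$ small and apply regularity to $G$, producing a partition $V_1,\dots,V_k$ with reduced graph $R$ whose edges are the $\eps$-regular pairs of density at least $d$. I would sort the edges of $G$ into three groups. \emph{Bad} edges --- those inside a cluster, inside an irregular pair, or inside a pair of density below $d$ --- total at most $\eta n^2$. \emph{Tur\'an-type} edges are those lying in pairs corresponding to the subgraph $R'\subseteq R$ of edges contained in no copy of $K_r$ in $R$; since $R'$ is itself $K_r$-free, Tur\'an's theorem gives $e(R') \le t_{r-1}(k)$, and hence the Tur\'an-type edges of $G$ number at most $t_{r-1}(n) + \eta n^2$. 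The remaining edges form a subgraph $G^\star$ each of whose edges lies in a regular, dense pair belonging to some $K_r \subseteq R$.

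The third step is to cover nearly all of $G^\star$ by an edge-disjoint $H$-packing. Each $K_r$ in $R$ supplies $\Theta((n/k)^{v(H)})$ copies of $H$ in its blow-up by the Counting/Embedding Lemma (this is where $\chi(H) = r$ is used). Consider the auxiliary hypergraph whose vertices are the edges of $G^\star$ and whose hyperedges are the copies of $H$ in $G^\star$. Standard regularity-based counting shows this hypergraph is almost regular with small codegrees, so R\"odl's nibble (or the Pippenger--Spencer theorem) yields an edge-disjoint matching leaving at most $\eta n^2$ edges of $G^\star$ uncovered. Combining the three groups, $G$ admits an $H$-decomposition with at most $t_{r-1}(n) + 3\eta n^2$ parts, and since $\eta>0$ was arbitrary, this yields $\phi(n,H) \le t_{r-1}(n) + o(n^2)$.

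The principal obstacle is the packing step: converting the abundance of $H$-copies guaranteed by the counting lemma into an actual edge-disjoint packing with negligible loss. Verifying near-regularity and small codegrees in the auxiliary hypergraph is delicate because copies of $H$ arising from different $K_r$'s in $R$ overlap on shared clusters, and one must ensure this interference is of smaller order than the main term. An alternative route would be to build a fractional $H$-packing directly from the $K_r$'s of $R\setminus R'$ --- assigning each copy of $H$ a weight so that every edge of $G^\star$ receives total weight $1-o(1)$ --- and then invoke a Haxell--R\"odl-type integer-versus-fractional comparison. Either way, this packing argument is the technical heart of the proof.
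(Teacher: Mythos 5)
Your lower bound is fine, but the upper bound has a genuine accounting error at the very last step, and it is not cosmetic. In your scheme the copies of $H$ are themselves parts of the decomposition: if you pack $N$ edge-disjoint copies of $H$ and leave the rest as single edges, the number of parts is $e(G)-N(e(H)-1)$, not ``$e(G)$ minus the packed edges''. Since your packing lives entirely inside $G^\star$, you have $N\le e(G^\star)/e(H)$, so the best your three groups give is roughly $t_{r-1}(n)+\tfrac{1}{e(H)}e(G^\star)+O(\eta n^2)$ parts, not $t_{r-1}(n)+3\eta n^2$. For the conclusion you would need the savings $(e(H)-1)N$ to exceed $m:=e(G)-t_{r-1}(n)-o(n^2)$, i.e. $e(G^\star)\ge\frac{e(H)}{e(H)-1}\,m-o(n^2)$; but the only estimate your argument supplies, via $e(R')\le t_{r-1}(k)$, is $e(G^\star)\ge m-O(\eta n^2)$, which is short by the multiplicative factor $\frac{e(H)}{e(H)-1}$. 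As written, the method only proves $\phi(n,H)\le t_{r-1}(n)+\frac{1}{e(H)}\bigl(\binom{n}{2}-t_{r-1}(n)\bigr)+o(n^2)$, an error of order $n^2$. The missing idea is precisely how to gain the extra factor: either one must also harvest copies of $H$ that use the ``Tur\'an-type'' (near-extremal) edges, or one argues through covers --- removing an $H$-cover leaves at most $\ex(n,H)=t_{r-1}(n)+o(n^2)$ edges, so $\tau_H(G)\ge m-o(n^2)$, and one then needs a covering-versus-packing inequality of the shape $\tau_H\le(e(H)-1)\nu_H+o(n^2)$. That ratio improvement (the analogue of the Krivelevich/Yuster bounds that this paper combines with the Haxell--R\"odl fractional-to-integral transfer in the proof of Lemma 2.6 for cliques) is exactly the non-trivial content of the theorem, and it is absent from your sketch.

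Two further remarks. First, your claim that the nibble/Haxell--R\"odl step covers all but $\eta n^2$ edges of $G^\star$ is not justified when the densities inside a reduced $K_r$ are unbalanced: for $r=3$, if one pair of the triple has density $0.9$ and the other two have density barely above $d$, most edges of the dense pair cannot be covered even fractionally by copies of $H$ confined to that triple, so a further argument (or a different bookkeeping) is needed there too. Second, note that the paper you are reading does not prove this theorem --- it quotes it from Pikhurko--Sousa; its own arguments (for tuples of cliques) proceed via $K_r$-covers, Tur\'an's theorem giving $\sum_i\tau_{r_i}(G_i)\ge m$, and the Haxell--R\"odl and Krivelevich/Yuster results to convert covers into packings, which is the kind of extra ingredient your regularity-plus-packing outline would need to incorporate.
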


Let $\ex(n,H)$ denote the maximum number of edges in a graph on $n$ vertices not containing $H$ as a subgraph. The result of Tur\'an \cite{PT41} states that $T_{r-1}(n)$ is the unique extremal graph for $\ex(n,K_r)$. The function $\ex(n,H)$ is usually called the \emph{Tur\'an function} for $H$. Pikhurko and Sousa \cite{PS07} also made the following conjecture. 

\begin{conj}\emph{\cite{PS07}}\label{PSconj}
For any graph $H$ of chromatic number $r\ge 3$, there exists $n_0=n_0(H)$ such that $\phi(n,H)=\ex(n,H)$ for all $n\ge n_0$.
\end{conj}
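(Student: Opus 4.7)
The lower bound $\phi(n,H)\ge\ex(n,H)$ is immediate: any extremal $H$-free graph $G^*$ on $n$ vertices satisfies $\phi(G^*,H)=e(G^*)=\ex(n,H)$, since no part of any decomposition can be a copy of $H$.

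For the upper bound, I work with the identity $\phi(G,H)=e(G)-\nu_H(G)(e(H)-1)$, where $\nu_H(G)$ is the maximum edge-disjoint packing number of $H$ in $G$, so it suffices to prove that every $n$-vertex graph $G$ with $n$ large satisfies
\[
\nu_H(G)\ \ge\ \frac{e(G)-\ex(n,H)}{e(H)-1}.
\]
Theorem~\ref{PSthm} already yields the weaker estimate $\phi(n,H)\le\ex(n,H)+o(n^2)$; the task is to sharpen this to exact equality. My plan is a case analysis on $e(G)$, combined with Erd\H{o}s--Simonovits supersaturation and stability for the Tur\'an problem. Fix a small $\delta>0$.

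In the dense-surplus regime $e(G)\ge\ex(n,H)+\delta n^2$, supersaturation produces $\Omega(n^{v(H)})$ copies of $H$ in $G$. Since each edge lies in at most $O(n^{v(H)-2})$ such copies, a greedy packing extracts $\Omega(n^2)$ pairwise edge-disjoint copies, far more than the $O(n^2)$ the target demands. In the near-extremal regime $\ex(n,H)<e(G)<\ex(n,H)+\delta n^2$, the Erd\H{o}s--Simonovits stability theorem forces $G$ to differ from the Tur\'an graph $T_{r-1}(n)$ by only $o(n^2)$ edges. I would fix a partition $V(G)=V_1\cup\cdots\cup V_{r-1}$ approximating this structure and treat the class-internal edges as the \emph{surplus} responsible for copies of $H$. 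Because $\chi(H)=r$, every copy of $H$ must use at least one internal edge; conversely, an internal edge sitting in a dense cross-class neighbourhood should complete to many embeddings of $H$. A greedy extraction that processes internal edges one at a time, while reserving fresh cross-class edges for each completion, should then yield the required number of edge-disjoint copies.

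The main obstacle I foresee is the delicate boundary where $e(G)-\ex(n,H)$ is only linear in $n$ (or smaller). There the $o(n^2)$ slack coming from stability dominates the surplus we are trying to cover, and approximate structural information becomes useless. In this regime one needs an exact stability description of near-extremal $H$-free graphs, together with an exchange argument: if a maximum $H$-packing $\mathcal{P}$ of $G$ falls short of the target, modify it by swapping a copy of $H$ to free edges that host an augmenting copy, contradicting the maximality of $\mathcal{P}$. For edge-critical $H$ this step is tractable because Simonovits's exact theorem pins down $T_{r-1}(n)$ as the unique extremal graph. For general $H$ the extremal $H$-free graph is not uniquely identified, so the exchange lemma must simultaneously control every near-extremal configuration, and this is the step I expect to be the hardest.
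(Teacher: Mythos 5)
The statement you are addressing is Conjecture~\ref{PSconj}, which this paper does not prove: it is quoted as an open conjecture of Pikhurko and Sousa, known only in special cases (Theorem~\ref{OPthm} settles it for edge-critical $H$, and Allen, B\"ottcher and Person obtained $\phi(n,H)=t_{r-1}(n)+O(n^{2-\alpha})$ in general). So there is no proof in the paper to compare against, and your proposal does not close the gap either; you yourself flag the decisive step as unresolved. Your lower bound is fine, and the reduction to showing $\nu_H(G)\ge (e(G)-\ex(n,H))/(e(H)-1)$ is the standard and correct starting point. But the two substantive steps are exactly where the difficulty lies. First, in the dense-surplus regime your ``greedy extraction from supersaturation'' does not give ``far more'' copies than needed: supersaturation yields $c\,n^{v(H)}$ copies with a constant $c$ depending on $\delta$ that is typically tiny, so a maximal edge-disjoint family has size at least $(c/e(H))n^2$, while the target can be as large as $\binom{n}{2}/(2(e(H)-1))$ up to the surplus; both are $\Theta(n^2)$ and the constants do not compare in your favour. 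One needs a G\H{o}ri/Yuster-type statement that the number of edge-disjoint copies is close to the \emph{surplus itself} (as in Theorem~\ref{EGthm} for cliques, or the covering-versus-packing machinery of Theorem~\ref{RYthm} used in Lemma~\ref{lm:new}); no such result is available for arbitrary $H$, and your sketch does not supply one.

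Second, and more fundamentally, the near-extremal regime (surplus $o(n^2)$, in particular linear or constant) is precisely where the conjecture lives, and your proposed ``exchange argument'' presupposes an exact structural description of near-extremal $H$-free graphs. For non-edge-critical $H$ the extremal number satisfies $\ex(n,H)>t_{r-1}(n)$, Erd\H{o}s--Simonovits stability only locates $G$ within $o(n^2)$ of $T_{r-1}(n)$, and the extremal $H$-free graphs need not be unique or even well understood; you acknowledge this but offer no mechanism to overcome it. This is not a technical loose end that a routine augmenting-path swap will fix: it is the open core of the problem, which is why \"Ozkahya and Person's exact result is confined to edge-critical $H$ (where Simonovits' exact Tur\'an theorem applies) and why Allen, B\"ottcher and Person could only improve the error term, showing moreover that an error of order $n^{2-\alpha}$ relative to $t_{r-1}(n)$ is sometimes genuinely present. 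As written, your text is a research plan reproducing the known strategy, not a proof of the conjecture.
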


A graph $H$ is \emph{edge-critical} if there exists an edge $e\in E(H)$ such that $\chi(H)>\chi(H-e)$, where $\chi(H)$ denotes the \emph{chromatic number} of $H$. For $r\geq 4$, a \emph{clique-extension of order $r$} is a connected graph that consists of a $K_{r-1}$ plus another vertex, say $v$, adjacent to at most $r-2$ vertices of $K_{r-1}$. Conjecture \ref{PSconj} has been verified by Sousa for some edge-critical graphs, namely, clique-extensions of order $r\geq 4$ ($n\geq r$) \cite{TS11}  and the cycles of length 5 ($n\geq 6$) and 7 ($n\geq10$)~\cite{TS05,TS:C7}. Later, \"Ozkahya and Person~\cite{OP12} verified the conjecture for all edge-critical graphs with chromatic number $r\geq 3$.   Their result is the following. 

\begin{thm}\emph{\cite{OP12}}\label{OPthm}
For any edge-critical graph $H$ with chromatic number $r\geq 3$, there exists $n_0=n_0(H)$ such that $\phi(n,H)=\ex(n,H)$, for all $n\geq n_0$. Moreover, the only graph attaining $\ex(n,H)$ is the Tur\'{a}n graph $T_{r-1}(n)$.
\end{thm}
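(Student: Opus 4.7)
The lower bound $\phi(n,H) \ge \ex(n,H)$ is immediate: taking $G = T_{r-1}(n)$, the fact that $\chi(T_{r-1}(n)) = r-1 < r = \chi(H)$ forces $T_{r-1}(n)$ to be $H$-free, so every $H$-decomposition consists of singleton edges, giving $\phi(T_{r-1}(n),H) = e(T_{r-1}(n)) = t_{r-1}(n)$; by Simonovits' theorem this equals $\ex(n,H)$ for large $n$, with $T_{r-1}(n)$ the unique extremal graph. It remains to prove $\phi(G,H) \le t_{r-1}(n)$ for every graph $G$ of order $n$. I would fix a critical edge $e^* \in E(H)$ with $\chi(H-e^*) = r-1$ and a proper $(r-1)$-colouring $c$ of $H-e^*$ in which both endpoints of $e^*$ lie in colour class $1$. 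Write $h := e(H)$, noting $h \ge 3$ since $\chi(H) \ge 3$. Given $G$, choose an $(r-1)$-partition $V(G) = V_1 \cup \cdots \cup V_{r-1}$ minimising the number $b$ of edges that lie inside a single $V_i$ (call these \emph{bad}); the remaining edges form an $(r-1)$-partite subgraph and so number at most $t_{r-1}(n)$. Minimality forces, by Zykov-style symmetrisation, every vertex to have at least as many neighbours in any other class as in its own; together with a small $b$ this pins the $|V_i|$ to be near-balanced. The plan now splits by the size of $b$.

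\textbf{Case 1: $b \le \eps n^2$ for a suitably small $\eps = \eps(H)$.} I would process bad edges one at a time: for each $uv \in B$ with $u,v \in V_i$, embed a copy of $H$ using $uv$ in the role of $e^*$, placing the remaining vertices of colour class $1$ into $V_i$ and each other colour class bijectively into a distinct $V_j$. Near-balance of the $V_i$ combined with minimality forces the bipartite subgraph between any two classes to miss at most $O(b)$ edges in total, and a direct counting then gives that the number of valid $H$-embeddings extending $uv$ is $\Omega(n^{v(H)-2})$. Each previously chosen $H$-copy blocks at most $O(n^{v(H)-4})$ such extensions via its $h-1$ crossing edges, so the total blocked count over the $O(n^2)$ copies placed so far is $O(\eps\, n^{v(H)-2})$, strictly less than the extension count once $\eps$ is small. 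The greedy process thus produces $b$ pairwise edge-disjoint copies of $H$, each using exactly one bad edge; writing the decomposition as these copies plus the leftover crossing edges gives
\[
\phi(G,H) \;\le\; b + \bigl(e(G) - b - b(h-1)\bigr) \;\le\; b + t_{r-1}(n) - b(h-1) \;=\; t_{r-1}(n) - b(h-2) \;\le\; t_{r-1}(n),
\]
using $e(G) - b \le t_{r-1}(n)$ and $h \ge 3$.

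\textbf{Case 2: every $(r-1)$-partition yields $b > \eps n^2$.} Then $G$ is $\Omega(n^2)$-far from being $(r-1)$-partite. By the supersaturation form of the Erd\H os--Simonovits theorem, $G$ contains $\Omega(n^{v(H)})$ copies of $H$, and a standard application of the R\"odl nibble (Frankl--R\"odl; Pippenger--Spencer) in the auxiliary $h$-uniform hypergraph on vertex set $E(G)$ whose hyperedges are the edge-sets of $H$-copies yields $\nu_H(G) \ge (e(G) - o(n^2))/h$. Since $t_{r-1}(n) \ge n^2/4$, $e(G) \le \binom{n}{2}$ and $h \ge 3$, a short calculation gives $(e(G) - o(n^2))/h \ge (e(G) - t_{r-1}(n))/(h-1)$ for $n$ large, and then the identity $\phi(G,H) = e(G) - \nu_H(G)(h-1)$ yields $\phi(G,H) \le t_{r-1}(n)$.

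The principal obstacle is Case~1: turning the qualitative near-balance and sparsity of bad edges coming from $b$-minimality into a quantitative $\Omega(n^{v(H)-2})$ extension bound that holds uniformly over every bad edge. The standard remedy is a preliminary cleanup step, removing a small set of atypical vertices with anomalously many missing crossing neighbours, running the greedy embedding on the cleaned subgraph, and then absorbing the deleted edges into the final decomposition as singletons, with an extra accounting check that the total count remains bounded by $t_{r-1}(n)$.
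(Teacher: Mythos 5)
First, a point of reference: the paper does not prove this statement at all --- Theorem~\ref{OPthm} is quoted from \"{O}zkahya and Person \cite{OP12} as background --- so your attempt can only be measured against the known proofs, which are organised around the edge surplus $m=e(G)-t_{r-1}(n)$ (stability when $m$ is small, a supersaturation/packing bound when $m$ is large), not around the distance of $G$ from being $(r-1)$-partite. That difference is where your argument breaks. In Case~2, both key claims are false under your hypothesis. A graph that is $\Omega(n^2)$-far from $(r-1)$-partite need not contain even one copy of $H$: for $H=K_3$ the balanced blow-up of $C_5$ is triangle-free, yet every bipartition leaves at least $n^2/25$ internal edges; supersaturation needs the hypothesis $e(G)\ge \ex(n,H)+\varepsilon n^2$, which you never impose. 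Likewise the nibble bound $\nu_H(G)\ge (e(G)-o(n^2))/h$ is false for arbitrary $G$ --- it would assert an asymptotically perfect $H$-packing of every dense graph (take $G=K_{n/2,n/2}$ and $H=K_3$, where $\nu_H(G)=0$). Pippenger--Spencer/Frankl--R\"odl require the auxiliary hypergraph of $H$-copies to be nearly regular with small codegrees, which an arbitrary $G$ does not supply; what is actually needed here is a Gy\H{o}ri-type packing statement under an edge-surplus hypothesis, in the spirit of Theorem~\ref{EGthm} as used in Section~\ref{Krsect}.

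Case~1 also has gaps. The assertion that minimality of the partition forces only $O(b)$ missing crossing edges is false in general: a sparse graph has small $b$ but misses almost all crossing pairs. It becomes true only once you assume $e(G)\ge t_{r-1}(n)$ (then missing crossing edges number at most $b$, since crossing pairs are at most $t_{r-1}(n)$); you may assume this without loss of generality, but you never say so, and it again shows the case distinction must be on $m$, not on $b$. More seriously, the greedy count is wrong: a previously embedded copy with an edge incident to $u$ or $v$ blocks $O(n^{v(H)-3})$ extensions, not $O(n^{v(H)-4})$, and $\Theta(n)$ earlier copies can all be incident to one vertex, because the surplus $m$ can be concentrated there (e.g.\ a dominating vertex added to $T_{r-1}(n-1)$); in that regime it can be impossible to place a copy of $H$ on every bad edge, and one must instead show directly that $\nu_H(G)\ge m/(e(H)-1)$, with strict inequality unless $G=T_{r-1}(n)$, which is exactly the delicate part of \"{O}zkahya--Person's argument. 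Your closing remark about a ``cleanup and accounting'' step defers precisely this difficulty rather than resolving it, and the uniqueness statement (the ``moreover'' clause) depends on that strictness as well.
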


Recently, as an extension of \"Ozkahya and Person's work (and as further evidence supporting Conjecture \ref{PSconj}), Allen, B\"ottcher, and Person \cite{ABP12+} improved the error term obtained by Pikhurko and Sousa in Theorem \ref{PSthm}. In fact,  they proved that the error term $o(n^2)$ can be replaced by $O(n^{2-\alpha})$ for some $\alpha>0$. Furthermore,  they also showed that this error term has the correct order of magnitude. Their result is indeed an extension of Theorem \ref{OPthm} since the error term $O(n^{2-\alpha})$ that they obtained vanishes for every edge-critical graph $H$.  

Motivated by the recent work about $H$-decompositions of graphs, a natural problem to consider is the Ramsey (or coloured) version of this problem. More precisely, let $G$ be a graph on $n$ vertices whose edges are coloured with $k$ colours, for some $k\geq 2$ and let $\mathcal H=(H_1,\dots , H_k)$ be a $k$-tuple of fixed graphs, where repetition is allowed. A \emph{monochromatic $\mathcal H$-decomposition} of $G$ is a partition of its edge set  such that each part is either a single edge, or forms a monochromatic copy of $H_i$ in colour $i$, for some $1\le i\le k$. Let $\phi_{k}(G,\mathcal H)$ be the smallest number, such that, for any $k$-edge-colouring of $G$, there exists a monochromatic $\mathcal H$-decomposition of $G$ with at most $\phi_{k}(G,\mathcal H)$ elements. Our goal is to study the function
\begin{equation*}
\phi_{k}(n,\mathcal H) = \max \{\phi_{k}(G,\mathcal H) \mid v(G)=n  \},
\end{equation*}
which is the smallest number $\phi$ such that, any $k$-edge-coloured graph of order $n$ admits a monochromatic $\mathcal H$-decomposition with at most $\phi$ elements. In the case when $H_i\cong H$ for every $1\le i\le k$, we simply write $\phi_{k}(G,H)=\phi_{k}(G,\mathcal H)$ and $\phi_{k}(n,H)=\phi_{k}(n,\mathcal H)$.

The function $\phi_{k}(n,K_r)$, for $k\ge 2$ and $r\ge 3$, has been studied by Liu and Sousa \cite{LS13}, who obtained results involving the Ramsey numbers and the Tur\'an numbers.  Recall that for $k\ge 2$ and integers $r_1,\dots,r_k\ge 3$, the \emph{Ramsey number for $K_{r_1},\dots , K_{r_k}$}, denoted by $R(r_1,\dots , r_k)$, is the smallest value of $s$, such that,  for every $k$-edge-colouring of $K_s$, there exists a monochromatic $K_{r_i}$ in colour $i$, for some $1\le i\le k$. For the case when $r_1=\cdots=r_k=r$, for some $r\ge 3$, we simply write $R_k(r)=R(r_1,\dots , r_k)$. 
Since $R(r_1,\dots , r_k)$ does not change under any permutation of $r_1,\dots , r_k$, without loss of generality, we assume throughout that $3\le r_1\le\cdots\le r_k$. The Ramsey numbers are notoriously difficult to calculate, even though, it is known that their values  are finite \cite{FR30}. To this date, the values of $R(3,r_2)$ have been determined exactly only for $3\le r_2\le 9$, and these are shown in the following table~\cite{R96}.
\[
\begin{array}{|c||c|c|c|c|c|c|c|c|c|c|c|c|c|}
\hline
r_2 & 3 & 4 & 5 & 6 & 7 & 8 & 9\\
\hline
R(3,r_2) & 6 & 9 & 14 & 18 & 23 & 28 & 36\\
\hline
\end{array}
\]

The remaining Ramsey numbers that are known exactly are $R(4,4)=18$, $R(4,5)=25$, and $R(3,3,3)=17$. The gap between the lower bound and the upper bound for other Ramsey numbers is generally quite large. 

For the case $R(3,3)=6$, it is easy to see that the only $2$-edge-colouring of $K_5$ not containing a monochromatic $K_3$  is the one where each colour induces a cycle of length $5$. From this $2$-edge-colouring, observe that we may take a `blow-up' to obtain a $2$-edge-colouring of the Tur\'an graph $T_5(n)$, and easily deduce that $\phi_2(n,K_3)\ge t_5(n)$. See Figure~\ref{blow-up of K_5}. \\[1ex] \label{blow-up of K_5}

\[ \unit = 1cm
\thnline 
\dl{-2}{1}{-4}{1}\dl{-4}{1}{-4.62}{2.9}\dl{-2}{1}{-1.38}{2.9}\dl{-1.38}{2.9}{-3}{4.08}\dl{-4.62}{2.9}{-3}{4.08}
\dotline
\dl{-2}{1}{-4.62}{2.9}\dl{-4.62}{2.9}{-1.38}{2.9}\dl{-1.38}{2.9}{-4}{1}\dl{-4}{1}{-3}{4.08}\dl{-2}{1}{-3}{4.08}
\pt{-2}{1}\pt{-4}{1}\pt{-4.62}{2.9}\pt{-1.38}{2.9}\pt{-3}{4.08}
%
\thnline 
\dl{1.9}{1}{3.1}{1}\dl{1.9}{0.92}{3.1}{0.92}\dl{1.9}{1.08}{3.1}{1.08}
\dl{3.61}{1.37}{4}{2.53}\dl{3.54}{1.4}{3.93}{2.56}\dl{3.68}{1.34}{4.07}{2.5}
\dl{1.39}{1.37}{1}{2.53}\dl{1.46}{1.4}{1.07}{2.56}\dl{1.32}{1.34}{0.93}{2.5}
\dl{1.2}{3.13}{2.18}{3.85}\dl{1.24}{3.07}{2.22}{3.79}\dl{1.16}{3.19}{2.14}{3.91}
\dl{3.8}{3.13}{2.82}{3.85}\dl{3.76}{3.07}{2.78}{3.79}\dl{3.84}{3.19}{2.86}{3.91}
\dotline
\dl{1.73}{1.17}{3.89}{2.73}\dl{1.68}{1.24}{3.84}{2.8}\dl{1.78}{1.1}{3.94}{2.66}
\dl{3.27}{1.17}{1.11}{2.73}\dl{3.32}{1.24}{1.16}{2.8}\dl{3.22}{1.1}{1.06}{2.66}
\dl{3.85}{2.9}{1.15}{2.9}\dl{3.85}{2.82}{1.15}{2.82}\dl{3.85}{2.98}{1.15}{2.98}
\dl{1.59}{1.25}{2.41}{3.83}\dl{1.51}{1.28}{2.33}{3.86}\dl{1.67}{1.22}{2.49}{3.8}
\dl{3.41}{1.25}{2.59}{3.83}\dl{3.49}{1.28}{2.67}{3.86}\dl{3.33}{1.22}{2.51}{3.8}
\thnline
\ellipse{1.5}{1}{0.5}{0.5}\ellipse{3.5}{1}{0.5}{0.5}\ellipse{4.12}{2.9}{0.5}{0.5}\ellipse{0.88}{2.9}{0.5}{0.5}\ellipse{2.5}{4.08}{0.5}{0.5}
\point{-4.9}{-0.5}{Figure~\ref{blow-up of K_5}. The $2$-edge-colouring of $K_5$, and its blow-up}
\]\\[1ex]

This example was the motivation for Liu and Sousa \cite{LS13} to study $K_r$-monochromatic decompositions of graphs, for $r\geq 3$ and $k\geq 2$. They have recently proved the following result.

\begin{thm}\emph{\cite{LS13}}\label{LSthm}
\begin{enumerate}
\item[(a)] $\phi_k(n,K_3) = t_{R_k(3)-1}(n)+o(n^2)$;
\item[(b)]  $\phi_k(n,K_3) = t_{R_k(3)-1}(n)$ for $k=2,3$ and $n$ sufficiently large; 
\item[(c)] $\phi_k(n,K_r) = t_{R_k(r)-1}(n)$, for $k\geq 2$, $r\geq 4$ and $n$ sufficiently large. 
\end{enumerate}

Moreover, the only graph attaining $\phi_k(n,K_r)$ in cases (b) and (c) is the Tur\'an graph $T_{R_k(r)-1}(n)$.
\end{thm}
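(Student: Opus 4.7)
Let $R=R_k(r)$ throughout. For the lower bound (common to all three parts), I would use the blow-up construction illustrated for $K_3$ in the paper. By the definition of $R_k(r)$ there exists a $k$-edge-colouring of $K_{R-1}$ with no monochromatic $K_r$. Blow this colouring up to $T_{R-1}(n)$ by replacing each vertex by a part of the Tur\'an graph and colouring the edges between parts $V_i,V_j$ by the colour of the edge $ij$ in $K_{R-1}$. Any $K_r$ in $T_{R-1}(n)$ spans $r$ distinct parts, so its image in $K_{R-1}$ is a $K_r$ and hence not monochromatic. Thus the only monochromatic $K_r$-decomposition of this coloured $T_{R-1}(n)$ consists of $t_{R-1}(n)$ single edges, proving $\phi_k(n,K_r)\ge t_{R-1}(n)$.

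For the upper bound, let $G$ be a $k$-edge-coloured graph of order $n$. If the colouring has no monochromatic $K_r$, then $G$ is $K_R$-free and Tur\'an gives $e(G)\le t_{R-1}(n)$, so a single-edge decomposition suffices; otherwise I would pack edge-disjoint monochromatic $K_r$'s, each replacing $\binom{r}{2}$ edges by one part and so saving $\binom{r}{2}-1$. For part (a), apply Szemer\'edi's regularity lemma to the colour classes: either the reduced graph is essentially $K_R$-free, giving $e(G)\le t_{R-1}(n)+o(n^2)$, or a $K_R$ in the reduced graph supersaturates into $\Omega(n^R)$ copies of $K_R$ in $G$, each yielding a monochromatic $K_r$, and a greedy edge-disjoint packing saves $\Omega(n^2)$ parts, bringing the count down to $t_{R-1}(n)+o(n^2)$.

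For parts (b) and (c) I would replace regularity with Erd\H os--Simonovits stability: any $K_R$-free graph of order $n$ with at least $t_{R-1}(n)-\delta n^2$ edges lies within $\gamma n^2$ edit-distance of $T_{R-1}(n)$. In the \emph{far-from-Tur\'an} case, supersaturation produces $\Omega(n^R)$ copies of $K_R$ and the greedy packing comfortably saves more parts than needed. In the \emph{close-to-Tur\'an} case, fix a partition $V(G)=V_1\dcup\cdots\dcup V_{R-1}$ minimising the edit-distance, and let $E^+$ be the edges inside parts and $E^-$ the absent edges between parts, so that $e(G)=t_{R-1}(n)+|E^+|-|E^-|$. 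For every $e\in E^+$ the almost-Tur\'an structure forces $e$ to extend to $\Omega(n^{R-2})$ copies of $K_R$, each containing a monochromatic $K_r$ through $e$. A Hall-type auxiliary bipartite-graph argument then lets me select, for each $e\in E^+$, a distinct monochromatic $K_r$ through $e$ with the selected copies pairwise edge-disjoint outside $E^+$. Packing these yields a decomposition of size
\[
e(G)-|E^+|\bigl(\tbinom{r}{2}-1\bigr)=t_{R-1}(n)-|E^-|-|E^+|\bigl(\tbinom{r}{2}-2\bigr)\le t_{R-1}(n),
\]
with equality forcing $|E^+|=|E^-|=0$, i.e.\ $G=T_{R-1}(n)$, which also gives the uniqueness clause.

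The main obstacle is the Hall-type packing in the close-to-Tur\'an case. For $r\ge 4$ the per-copy saving $\binom{r}{2}-1\ge 5$ is comfortable and the ``$\Omega(n^{R-2})$ mono-copies through a fixed edge'' estimate is robust, so (c) holds for all $k\ge 2$. For $r=3$ the saving drops to $2$ and the analysis depends on a fine understanding of the Ramsey-extremal colourings of $K_{R_k(3)-1}$, which is only fully available for $k\le 3$ (e.g.\ the unique $2$-colouring $C_5\sqcup C_5$ of $K_5$ for $k=2$); this accounts for the restriction to $k=2,3$ in part (b).
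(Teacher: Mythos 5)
Your lower bound (the blow-up of an extremal Ramsey colouring of $K_{R-1}$) is exactly the paper's construction and is fine. The upper bounds, however, have genuine gaps, and they sit precisely where the real work lies. For (a): you must save $m-o(n^2)$ parts, where $m=e(G)-t_{R-1}(n)$ can be $\Theta(n^2)$ with a constant as large as roughly $1/(2(R-1))$; a packing that merely "saves $\Omega(n^2)$ parts" does not bring the count down to $t_{R-1}(n)+o(n^2)$. Even the cleanest greedy (repeatedly delete a monochromatic triangle while more than $t_{R-1}(n)$ edges remain) only yields about $m/3$ triangles, hence a bound of $t_{R-1}(n)+m/3$, and supersaturation-plus-greedy gives constants far too small. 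This is exactly why the argument of \cite{LS13}, recalled in the proof of Lemma~\ref{lm:new}, goes through covers and fractional relaxations: deleting a $K_3$-cover from each colour class kills every $K_R$, so $\sum_i\tau_3(G_i)\ge m$ by Tur\'an, and Theorem~\ref{RYthm} (Krivelevich's $\tau_3\le 2\tau_3^*$ combined with Haxell--R\"odl's $\nu_3^*=\nu_3+o(n^2)$) converts this into $\sum_i\nu_3(G_i)\ge m/2-o(n^2)$, i.e.\ savings of $m-o(n^2)$. Your sketch has no substitute for this covering--packing step.

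The same quantitative issue defeats your "far-from-Tur\'an" case in (b),(c): there you need savings strictly exceeding $m$ with $m=\Omega(n^2)$, and "the greedy packing comfortably saves more parts than needed" is unsubstantiated -- this large-excess regime is the technical heart of the paper (Lemma~\ref{lm:new}), handled by a contradiction argument that builds explicit triangle covers via Krivelevich's parent-edge algorithm, applies Theorem~\ref{ESthm} to the $K_R$-free graph obtained after their removal, and shows at least half of the cover edges would have to be missing cross-edges, forcing $|M|=\Omega(n^2)$. Your close-to-Tur\'an case has an additional flaw: a copy of $K_R$ containing an interior edge $e\in E^+$ contains a monochromatic $K_{r_i}$ by Ramsey, but not necessarily one \emph{through} $e$ -- when the cross edges are not coloured by an extremal blow-up, the monochromatic clique can avoid $e$ entirely -- so the Hall-type selection of one monochromatic copy per $E^+$-edge has no justification; moreover "pairwise edge-disjoint outside $E^+$" is not sufficient for a legitimate decomposition. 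The paper needs none of this: for $0<m<\tbinom{n}{2}/C$ it invokes Gy\H{o}ri's theorem (Theorem~\ref{EGthm}) to obtain more than $m/2$ edge-disjoint copies of $K_R$, each containing a monochromatic clique saving at least $2$, which already gives $\phi_k<t_{R-1}(n)$, and the equality/uniqueness case is settled at $m=0$ by Tur\'an's theorem. Finally, your explanation of the restriction $k=2,3$ in (b) reflects the method of \cite{LS13}, but note that Theorem~\ref{Krexact} removes that restriction for large $n$ without any knowledge of the extremal Ramsey colourings.
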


They also made the following conjecture.
\begin{conj} \textup{\cite{LS13}}\label{LSconj}
Let $k\ge 4$. Then $\phi_k(n,K_3) = t_{R_k(3)-1}(n)$ for $n\ge R_k(3)$.
\end{conj}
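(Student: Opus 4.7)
The lower bound $\phi_k(n,K_3)\ge t_{R_k(3)-1}(n)$ follows from the standard construction: take a $k$-edge-colouring of $K_{R_k(3)-1}$ with no monochromatic triangle (one exists by the definition of $R_k(3)$) and blow it up to a $k$-edge-coloured copy of $T_{R_k(3)-1}(n)$. Each colour class becomes a blow-up of a triangle-free graph, hence is itself triangle-free, so no monochromatic $K_3$ exists and every decomposition must consist of single edges.

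For the upper bound, let $G_1,\dots,G_k$ be the colour classes of $G$. Since monochromatic triangles of distinct colours are automatically edge-disjoint,
$$
\phi_k(G,K_3)\;=\;\sum_{i=1}^k\phi(G_i,K_3)\;=\;e(G)-2\nu^*(G),
$$
where $\nu^*(G)=\sum_{i}\nu_{K_3}(G_i)$ is the maximum number of pairwise edge-disjoint monochromatic triangles. Writing $T=t_{R_k(3)-1}(n)$ and $m=e(G)-T$, the conjecture reduces to showing $\nu^*(G)\ge m/2$ whenever $m\ge 1$. Naive greedy peeling via Tur\'an and Ramsey yields only $\nu^*(G)\ge m/3$, so a genuinely stronger packing statement is required.

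The plan is a stability dichotomy applied to the residual graph $G^*=G\setminus E(\mathcal T)$ obtained from an optimal packing $\mathcal T$. By maximality each colour class of $G^*$ is triangle-free, so $G^*$ has no monochromatic $K_3$; by Ramsey $G^*$ is $K_{R_k(3)}$-free, whence $e(G^*)\le T$ by Tur\'an. In the \emph{far} regime---where $G^*$ is at edit distance $\ge\delta n^2$ from $T_{R_k(3)-1}(n)$---Erd\H os--Simonovits supersaturation produces $\Omega(n^{R_k(3)})$ copies of $K_{R_k(3)}$ in $G$, each carrying (by Ramsey) a monochromatic triangle; a fractional-to-integral matching argument on the monochromatic-triangle $3$-uniform hypergraph then upgrades $\nu^*(G)$ to $\Omega(n^2)$, which exceeds $m/2$. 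In the \emph{close} regime the coloured graph $G^*$ is $o(n^2)$-close to a blow-up of some Ramsey-optimal colouring $\mathcal C$ of $K_{R_k(3)-1}$. Here I would invoke a direct consequence of Ramsey: in every such $\mathcal C$, each vertex is incident to an edge of every colour---otherwise, duplicating a vertex $v$ that misses colour $c$ and colouring the new edge by $c$ would produce a $k$-edge-coloured $K_{R_k(3)}$ with no monochromatic triangle, contradicting the definition of $R_k(3)$. Consequently every ``internal'' edge of $G\setminus G^*$ of colour $c$ inside a part $V_j$ extends to a monochromatic triangle via two cross-part edges of colour $c$ in the Tur\'an backbone; a greedy packing of these extensions, together with a local exchange argument replacing triangles of $\mathcal T$ supported within a single part, then yields $\nu^*(G)\ge m/2$.

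The principal obstacle is the close regime, which requires both a colouring-sensitive stability lemma---that any mono-$K_3$-free $k$-edge-coloured graph with close to $T$ edges is close (as a coloured graph) to a blow-up of some Ramsey-optimal $\mathcal C$---and an absorption argument that works \emph{uniformly} across all such $\mathcal C$. The cases $k=2,3$ in Theorem~\ref{LSthm}(b) sidestep this by exploiting the known explicit Ramsey-optimal colourings of $K_5$ and $K_{16}$ respectively; for $k\ge 4$ no classification is available (indeed $R_k(3)$ itself is unknown), so a colouring-agnostic argument is required. Developing such a stability-plus-absorption framework is where I expect the bulk of the technical work to lie.
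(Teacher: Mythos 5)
Your proposal is a programme rather than a proof: you yourself state that the ``close regime'' needs a colouring-sensitive stability lemma plus an absorption argument that works uniformly over all Ramsey-optimal colourings of $K_{R_k(3)-1}$, and none of that is supplied. For $k\ge 4$ neither $R_k(3)$ nor its extremal colourings are known, so this acknowledged gap is precisely the hard content of the statement; moreover the claim that a near-extremal monochromatic-triangle-free coloured graph must be $o(n^2)$-close \emph{as a coloured graph} to a blow-up of a single Ramsey-optimal colouring is itself unproved (uncoloured stability only gives the vertex partition, not constancy of colours between parts). There is also a quantitative mismatch: every tool you invoke (stability, supersaturation, the Haxell--R\"odl fractional-to-integral conversion) carries $o(n^2)$ error terms, so even a completed version of your plan would prove the equality only for $n\ge n_0(k)$, not for all $n\ge R_k(3)$ as the conjecture asserts; the paper itself only verifies the conjecture in this asymptotic sense (Theorem~\ref{Krexact} with $r_1=\cdots=r_k=3$), and you should have flagged that your methods cannot reach the stated range either. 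A smaller technical slip: supersaturation needs an edge surplus in $G$, which in your dichotomy must be extracted via the contrapositive of stability applied to $G^*$ under the assumption $\nu^*<m/2$; as written, ``$G^*$ far from Tur\'an'' does not by itself yield $\Omega(n^{R_k(3)})$ cliques. Note also that your close regime contains the delicate small-$m$ case (e.g.\ $m=1$), where blow-up heuristics about ``internal'' edges give nothing without substantial extra work.

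The paper's route avoids any structural knowledge of Ramsey colourings, which is what your plan is missing. For $0<m<\binom{n}{2}/C$ it applies Gy\H{o}ri's theorem (Theorem~\ref{EGthm}) with $r=R$ to find more than $m/2$ edge-disjoint copies of $K_R$; each contains a monochromatic triangle by the definition of $R$, so $\sum_i\nu_3(G_i)>m/2$ immediately, with no residual-graph dichotomy. For $m=\Omega(n^2)$ it proves Lemma~\ref{lm:new}: assuming a counterexample, it combines Yuster's bound (Theorem~\ref{RYthm}), i.e.\ Krivelevich's $\tau_3\le 2\tau_3^*$ together with LP duality and Haxell--R\"odl, with the uncoloured stability theorem, and---this is the key idea you lack---turns Krivelevich's proof into an algorithm producing triangle covers $F_i$ whose ``parent'' edges are shown, via a Ramsey argument inside an almost-complete transversal $R$-set, to join distinct parts of the stability partition; consequently at least half of the $\Omega(n^2)$ cover edges are missing from the Tur\'an backbone, contradicting stability. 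The colouring enters only through the definition of $R$, so no classification of, or absorption over, extremal colourings is ever needed. If you wish to salvage your outline, replace the far regime by Gy\H{o}ri's theorem and the close regime by a Lemma~\ref{lm:new}-type argument.
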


Here, we will study an extension of the monochromatic $K_r$-decomposition problem when the clique $K_r$ is replaced by a fixed $k$-tuple of cliques $\mathcal C=(K_{r_1},\dots,K_{r_k})$. Our main result, stated in Theorem \ref{Krexact}, is clearly an  extension of Theorem \ref{LSthm}. Also, it verifies Conjecture \ref{LSconj} for sufficiently large $n$. 

\begin{thm}\label{Krexact}
Let $k\geq 2$, $3\le r_1\le\cdots\le r_k$, and $R=R(r_1,\dots,r_k)$. Let $\mathcal C=(K_{r_1},\dots,K_{r_k})$. Then, there is an $n_0=n_0(r_1,\dots,r_k)$ such that, for all $n\geq n_0$, we have 
\begin{equation*}
\phi_k(n,\mathcal C) = t_{R-1}(n).\label{Krthmeq}
\end{equation*}

Moreover, the only order-$n$ graph attaining $\phi_k(n,\mathcal C) $ is the Tur\'{a}n graph $T_{R-1}(n)$ (with a $k$-edge-colouring that does not contain a colour-i copy of $K_{r_i}$ for any $1\le i\le k$).
\end{thm}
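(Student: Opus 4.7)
The lower bound is a simple blow-up; essentially all the work lies in the upper bound, which I would attack via a greedy monochromatic clique-removal combined with a stability dichotomy. For the lower bound, since $R-1<R(r_1,\dots,r_k)$, fix a $k$-edge-colouring of $K_{R-1}$ that avoids a colour-$i$ copy of $K_{r_i}$ for every $i$, and blow up each vertex of $K_{R-1}$ to an independent class of size $\lfloor n/(R{-}1)\rfloor$ or $\lceil n/(R{-}1)\rceil$, transferring colours to the cross-class edges. The result is a $k$-edge-coloured $T_{R-1}(n)$ in which any colour-$i$ copy of $K_{r_i}$ would pick one vertex per class and project to a forbidden copy in the base colouring; hence the blow-up contains no monochromatic $K_{r_i}$ at all, and its only monochromatic $\mathcal C$-decomposition is the trivial one with $t_{R-1}(n)$ parts.

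\emph{Upper-bound set-up.} Let $G$ be a $k$-edge-coloured graph on $n$ vertices. The plan is to pick a \emph{maximum} collection $\mathcal F$ of pairwise edge-disjoint monochromatic cliques (with $m_i$ copies of $K_{r_i}$ in colour $i$); the residual $G':=G\setminus\bigcup\mathcal F$ contains no monochromatic $K_{r_i}$ in colour $i$, hence by the definition of $R$ no $K_R$ at all, so $e(G')\le t_{R-1}(n)$ by Tur\'an's theorem, with equality only when $G'\cong T_{R-1}(n)$. The induced decomposition has
\[
 |\mathcal F|+e(G')\;=\;e(G)-\sum_{i=1}^{k}m_i\!\left(\binom{r_i}{2}-1\right)
\]
parts, and the remaining task is to show this quantity is at most $t_{R-1}(n)$, with equality only in the extremal case.

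\emph{Stability dichotomy.} Fix constants $0<\eps\ll\delta$ depending only on $r_1,\dots,r_k$. If $G$ is $\eps$-far from $T_{R-1}(n)$ in edit distance, then either $e(G)\le t_{R-1}(n)$ (the trivial decomposition already wins, strictly, by Tur\'an uniqueness), or $e(G)>t_{R-1}(n)$ and Erd\H os--Simonovits stability plus supersaturation produce $\Omega(n^R)$ copies of $K_R$ in $G$, each supplying a monochromatic $K_{r_i}$ by Ramsey; a greedy edge-disjoint extraction then yields $\Omega(n^2)$ monochromatic cliques whose combined savings are $\Omega(n^2)$ and easily dominate $e(G)-t_{R-1}(n)=O(n^2)$. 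If instead $G$ is $\eps$-close to $T_{R-1}(n)$, fix a stability partition $V_1\sqcup\cdots\sqcup V_{R-1}$ and charge each local defect (intra-class edge, missing cross-class edge, or a ``Ramsey'' colour pattern on an $R$-transversal) to a monochromatic $K_{r_i}$ built by completing the defect with $r_i-2$ common neighbours drawn from appropriate classes---these exist in abundance because closeness to $T_{R-1}(n)$ forces all but $O(\eps n)$ vertices of each class to have nearly complete monochromatic neighbourhoods in the remaining classes. Choosing the cliques nearly edge-disjoint converts every defect into a strictly positive saving, forcing the decomposition strictly below $t_{R-1}(n)$ unless no defect exists---i.e.\ unless $G=T_{R-1}(n)$ with a $k$-colouring avoiding a colour-$i$ copy of $K_{r_i}$ for every $i$, which simultaneously delivers the bound and the uniqueness claim.

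\emph{Main obstacle.} The hard part will be the close-to-extremal analysis: the three kinds of defect must be \emph{simultaneously} charged to an \emph{edge-disjoint} family of monochromatic cliques, while the saving $\binom{r_i}{2}-1$ varies with $i$. Careful bookkeeping is needed to ensure completion vertices are not reused across defects and to avoid overspending savings on defects that admit a cheaper certificate. The far-from-extremal case, by comparison, is a routine supersaturation-plus-greedy argument.
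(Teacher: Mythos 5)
Your lower bound and your maximal-packing set-up are fine (maximality of $\mathcal F$ does force the residual to be free of monochromatic $K_{r_i}$'s, hence $K_R$-free), but the heart of the proof is missing: your ``far from $T_{R-1}(n)$'' case is not the routine supersaturation-plus-greedy argument you claim. The savings obtainable by extracting one monochromatic clique from each member of an edge-disjoint family of $K_R$'s is at most $\bigl(\binom{r_k}{2}-1\bigr)\cdot e(G)/\binom{R}{2}$, and in the worst case (triangle colours) only $2e(G)/\binom{R}{2}$, whereas the excess $m=e(G)-t_{R-1}(n)$ can be of order $e(G)/(R-1)$. Concretely, take $k=2$, $r_1=r_2=3$, so $R=6$, and $G=K_n$ with a colouring whose only monochromatic cliques are triangles: even a \emph{perfect} $K_6$-packing yields at most $\binom{n}{2}/15$ edge-disjoint $K_6$'s, hence savings at most $2\binom{n}{2}/15$, which is strictly less than $m\approx\binom{n}{2}/5$. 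Your greedy extraction from $\Omega_{\varepsilon}(n^R)$ copies of $K_R$ is even weaker, since its constant comes from removal-lemma/stability bounds and is tiny compared with the absolute constant in $m=\Theta(n^2)$. So ``combined savings $\Omega(n^2)$ easily dominate $e(G)-t_{R-1}(n)$'' is false as stated; one must find substantially more than one monochromatic clique per $K_R$, and no argument for that is given.

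This dense regime is exactly what the paper's Lemma~\ref{lm:new} handles, and it is the hard part of the whole proof: from Tur\'an's theorem one gets $\sum_i\tau_{r_i}(G_i)\ge m$ (removing a $K_{r_i}$-cover in each colour kills every $K_R$), and then Yuster's asymptotic Tuza-type bound $\tau_r\le\lfloor r^2/4\rfloor\nu_r+o(n^2)$ (Theorem~\ref{RYthm}), together with Krivelevich's $\tau_3\le 2\tau_3^*$, Haxell--R\"odl and LP duality, converts covers into packings whose savings reach $m-o(n^2)$; closing the remaining $o(n^2)$ gap and getting the strict inequality requires the delicate ``parent-edge'' analysis of the triangle-cover algorithm against the stability partition. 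None of this is replaceable by counting edge-disjoint $K_R$'s. Two smaller points: in your far case with $e(G)=t_{R-1}(n)$ the trivial decomposition does \emph{not} win strictly --- you must invoke Tur\'an uniqueness to find a $K_R$, hence a monochromatic clique, to save at least one part; and your close-to-$T_{R-1}(n)$ case (where $m=O(\varepsilon n^2)$) needs no intricate defect-charging --- Gy\H{o}ri's theorem (Theorem~\ref{EGthm}) already supplies about $m$ edge-disjoint $K_R$'s, each worth a saving of at least $2$, which is how the paper finishes.
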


The upper bound of Theorem~\ref{Krexact} is proved in Section~\ref{Krsect}. The lower bound follows easily by the definition of the Ramsey number. Indeed, take a $k$-edge-colouring $f'$ of the complete graph $K_{R-1}$ without a  monochromatic $K_{r_i}$ in colour $i$, for all $1\le i\le k$. Note that $f'$ exists by definition of the Ramsey number $R=R(r_1,\dots,r_k)$.
Let $u_1,\dots, u_{R-1}$ be the vertices of the $K_{R-1}$. Now, consider the Tur\'an graph $T_{R-1}(n)$ with a $k$-edge-colouring $f$ which is a `blow-up' of $f'$. That is, if $T_{R-1}(n)$ has partition classes $V_1,\dots , V_{R-1}$, then for $v\in V_j$ and $w\in V_\ell$ with $j\neq\ell$, we define $f(vw)=f'(u_ju_\ell)$. Then, $T_{R-1}(n)$ with this $k$-edge-colouring has no monochromatic $K_{r_i}$ in colour $i$, for every $1\le i\le k$. Therefore, $\phi_k(n,\mathcal C)\ge \phi_k(T_{R-1}(n), \mathcal C) =t_{R-1}(n)$ and the lower bound in Theorem~\ref{Krexact} follows.

In particular, when all the cliques in $\mathcal  C$ are equal, Theorem~\ref{Krexact} completes the results obtained previously by Liu and Sousa in Theorem~\ref{LSthm}. In fact, we get the following direct corollary from Theorem~\ref{Krexact}. 

\begin{cor} Let $k\geq 2$, $r\ge 3$ and $n$ be sufficiently large. Then, 
$$\phi_k(n,K_r) =t_{R_k(r)-1}(n).$$

Moreover, the only order-$n$ graph attaining $\phi_k(n,K_r)$  is the Tur\'an graph $T_{R_k(r)-1}(n)$ (with a $k$-edge-colouring that does not contain a monochromatic copy of $K_{r}$).
\end{cor}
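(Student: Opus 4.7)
My plan is to prove the upper bound $\phi_k(G,\mathcal C) \le t_{R-1}(n)$ for every $k$-edge-coloured $n$-vertex graph $G$ (the matching lower bound being already established in the excerpt), with equality forcing $G=T_{R-1}(n)$ carrying a Ramsey-extremal colouring. If $e(G)\le t_{R-1}(n)$ the trivial singleton decomposition already satisfies this, so assume $e(G)>t_{R-1}(n)$. Then Tur\'an's theorem gives $K_R\subseteq G$, and by the definition of $R$ every $k$-colouring of this $K_R$ contains a monochromatic $K_{r_i}$ in colour $i$ for some $i$; extracting one saves $\binom{r_i}{2}-1\ge 2$ parts. The first step is the approximate bound $\phi_k(n,\mathcal C)\le t_{R-1}(n)+o(n^2)$, obtained from Erd\H os--Simonovits supersaturation: if $e(G)\ge t_{R-1}(n)+\delta n^2$ then $G$ contains $\Omega_\delta(n^R)$ copies of $K_R$, yielding $\Omega_\delta(n^{r_i})$ monochromatic $K_{r_i}$'s in some colour $i$, which a greedy packing (each edge lies in $O(n^{r_i-2})$ such cliques) converts into $\Omega(n^2)$ edge-disjoint extractions.

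To upgrade to the exact bound, I would combine this with the Erd\H os--Simonovits stability theorem for $K_R$-free graphs: any $G$ for which $\phi_k(G,\mathcal C)$ approaches $t_{R-1}(n)$ from above must admit a vertex partition $V_1\dcup\cdots\dcup V_{R-1}$ with $|V_j|=n/(R-1)+o(n)$ and only $o(n^2)$ \emph{defect} pairs --- either edges of $G$ lying inside some $V_j$ (\textbf{bad} edges) or non-edges of $G$ between distinct parts (\textbf{missing} pairs). A standard cleanup lets me further assume that each vertex is in at most $o(n)$ defect pairs. Writing $b$ for the number of bad edges, one then checks $e(G)-t_{R-1}(n)\le b$ (because the cross-edges of $G$ do not exceed the Tur\'an-optimal count $t_{R-1}(n)$), so it suffices to exhibit $\lceil b/2\rceil$ pairwise edge-disjoint monochromatic cliques: each saves at least $2$ parts, giving total savings $\ge b\ge e(G)-t_{R-1}(n)$, and hence $\phi_k(G,\mathcal C)\le t_{R-1}(n)$.

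To produce these cliques, for each bad edge $e=uv\subseteq V_j$ I would count transversal $(R-2)$-sets $W$ with exactly one vertex from each part other than $V_j$ such that $\{u,v\}\cup W$ induces a $K_R$ in $G$ meeting no further defect pair; the estimate $\Theta(n^{R-2})$ on this count follows from the per-vertex defect-degree bound. Inside each such $K_R$ the Ramsey property delivers a monochromatic $K_{r_i}$ in colour $i$, which is extracted. A random-greedy / hypergraph-matching step in the spirit of the proof of Theorem~\ref{LSthm}(c) selects one such clique per bad edge, pairwise edge-disjointly, and this is precisely where I expect the main difficulty: clusters of bad edges at a few vertices, or a globally sparse colour class, could obstruct a naive greedy argument, and the analysis must couple the extraction with an iterative cleanup of vertices of abnormally large defect-degree. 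Finally, for the equality case, $\phi_k(G,\mathcal C)=t_{R-1}(n)$ leaves no room for any positive saving from an extraction, which forces $b=0$; combined with $e(G)\le t_{R-1}(n)$ and Tur\'an's uniqueness this yields $G=T_{R-1}(n)$ with a colouring that must be a blow-up of a Ramsey-extremal $k$-colouring of $K_{R-1}$.
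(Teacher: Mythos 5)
First, note that in the paper this corollary is obtained simply by specialising Theorem~\ref{Krexact} to $r_1=\cdots=r_k=r$, so what you are really attempting is a fresh proof of the main theorem, and your sketch has genuine gaps at exactly the hard points. (i) Your first step does not establish even the approximate bound: supersaturation plus greedy packing yields only $c(\delta)n^2$ edge-disjoint monochromatic cliques, i.e.\ savings of order $c'(\delta)n^2$ for some small constant, whereas you need savings at least $m-o(n^2)$ where $m=e(G)-t_{R-1}(n)$ can itself be $\Theta(n^2)$ with a much larger constant (take $G=K_n$). Matching $m$ up to $o(n^2)$ is precisely what requires the covering--packing machinery: deleting a $K_{r_i}$-cover from each colour class leaves a $K_R$-free graph, so $\sum_i\tau_{r_i}(G_i)\ge m$, and Yuster's bound $\tau_r\le\lfloor r^2/4\rfloor\nu_r+o(n^2)$ (Theorem~\ref{RYthm}, resting on Krivelevich and Haxell--R\"odl), together with $\bigl(\binom{r}{2}-1\bigr)/\lfloor r^2/4\rfloor\ge 1$, converts this into a packing whose savings reach $m-o(n^2)$; no local greedy count recovers this loss-free constant. (ii) Your stability step is unjustified for the same reason: Theorem~\ref{ESthm} applies to $K_R$-free graphs, but your $G$ has more than $t_{R-1}(n)$ edges and hence contains copies of $K_R$; to invoke stability one must first delete covers of total size $m+o(n^2)$, which again is the fractional-cover argument, not something that follows from ``$\phi_k(G,\mathcal C)$ is close to $t_{R-1}(n)$'' by itself.

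(iii) The step you yourself flag as the main difficulty --- selecting $\lceil b/2\rceil$ pairwise edge-disjoint monochromatic cliques, one per bad edge --- is where the entire content of the theorem lies, and it is not routine: the monochromatic $K_{r_i}$ found inside a transversal $K_R$ through a bad edge $uv$ need not contain $uv$ (it may consist entirely of cross edges), so ``one clique per bad edge'' does not even guarantee distinct cliques, let alone edge-disjoint ones; and when some $r_i=3$ the accounting ``each clique saves at least $2$'' is exactly tight, so any lossy greedy or absorption step cannot close the gap. The paper avoids this altogether: for $0<m<\binom{n}{2}/C$, Gy\H{o}ri's Theorem~\ref{EGthm} supplies more than $m/2$ edge-disjoint copies of $K_R$, whose monochromatic cliques are automatically edge-disjoint and already give $\phi_k(G,\mathcal C)<t_{R-1}(n)$, while the regime $m=\Omega(n^2)$ is handled by Lemma~\ref{lm:new}, whose proof runs the fractional-cover algorithm and the parent/children analysis rather than a clique-by-clique matching. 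Finally, two smaller points: your equality analysis has a parity gap (savings $2\lceil b/2\rceil\ge b$ gives no strict inequality when $b=m\ge 1$ is even), and the extremal colouring of $T_{R-1}(n)$ is not forced to be a blow-up of a colouring of $K_{R-1}$ --- only the absence of a colour-$i$ copy of $K_{r_i}$ for each $i$ is forced.
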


\section{Proof of Theorem \ref{Krexact}}\label{Krsect}

In this section we will prove the upper bound in Theorem \ref{Krexact}. Before presenting the proof  we need to introduce the tools. Throughout this section, let $k\geq 2$, $3\le r_1\le\cdots\le r_k$ be an increasing sequence of integers, $R=R(r_1,\dots,r_k)$ be the Ramsey number for $K_{r_1},\dots,K_{r_k}$, and $\mathcal C=(K_{r_1},\dots,K_{r_k})$ be a fixed $k$-tuple of cliques. 

We first recall the following stability theorem of Erd\H{o}s and Simonovits \cite{E67,S68}.

\begin{thm}[Stability Theorem \cite{E67,S68}]\label{ESthm}
Let $r\ge 3$, and $G$ be a graph on $n$ vertices with $e(G)\ge t_{r-1}(n)+o(n^2)$ and not containing $K_r$ as a subgraph. Then, there exists an $(r-1)$-partite graph $G'$ on $n$ vertices with partition classes $V_1,\dots,V_{r-1}$, where $|V_i|=\frac{n}{r-1}+o(n)$ for $1\le i\le r-1$, that can be obtained from $G$ by adding and subtracting $o(n^2)$ edges. 
\end{thm}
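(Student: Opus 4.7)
The plan is to prove this stability result via the Szemer\'edi regularity method. Fix a small parameter $\eta>0$ (much smaller than any error we are willing to tolerate). First I would apply the Szemer\'edi Regularity Lemma to $G$ with parameter $\eta$, obtaining an equipartition $V_0\cup V_1\cup\cdots\cup V_m$ with $|V_0|\le\eta n$, common cluster size $L=\lfloor(n-|V_0|)/m\rfloor$, and at most $\eta\binom{m}{2}$ irregular pairs. I would then form the \emph{reduced graph} $R$ on vertex set $[m]$, joining $i\sim j$ whenever $(V_i,V_j)$ is $\eta$-regular with density at least some small threshold $d>0$.

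The first key step is to show that $R$ is $K_r$-free. By the standard Counting (or Embedding) Lemma, an $r$-clique in $R$ would yield many copies of $K_r$ in $G$ formed by selecting one vertex from each of the $r$ corresponding clusters through their $\eta$-regular pairs of density $\ge d$, contradicting the $K_r$-freeness of $G$. Tur\'an's theorem therefore gives $e(R)\le t_{r-1}(m)$. To link this with $e(G)$, observe that edges within clusters, edges incident to $V_0$, and edges across irregular or sub-threshold pairs together contribute at most $O(\eta+d)n^2$, while the remaining edges contribute at most $L^2\cdot e(R)\le L^2\cdot t_{r-1}(m)\le t_{r-1}(n)$. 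Combining with the hypothesis $e(G)\ge t_{r-1}(n)-o(n^2)$ forces $e(R)\ge t_{r-1}(m)-\gamma m^2$ for some $\gamma=\gamma(\eta,d)\to 0$.

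The core technical step is a stability claim at the level of the reduced graph: a $K_r$-free graph $R$ on $m$ vertices with $e(R)\ge t_{r-1}(m)-\gamma m^2$ can be made $(r-1)$-partite by deleting at most $o(m^2)$ edges, with a nearly balanced partition. One clean way is to take a partition $W_1,\dots,W_{r-1}$ of $[m]$ maximizing the number of crossing edges; by a local switching argument, all but $o(m)$ vertices have at most $o(m)$ non-neighbours in each $W_s$ with $s$ different from their own class. If too many within-class edges survived, one could greedily extract a $K_r$ by choosing one good vertex from each $W_s$ in turn, using the abundance of cross-neighbours to maintain common neighbourhoods and an edge inside some $W_t$ to close the clique, contradicting $K_r$-freeness. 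Balancedness of the $|W_i|$ follows because $t_{r-1}(m)$ is strictly maximized by the balanced partition and any imbalance of size $\Omega(m)$ costs $\Omega(m^2)$ edges.

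Finally I would lift back to $G$: set $V_j^{\ast}=\bigcup_{i\in W_j}V_i$ and distribute the exceptional set $V_0$ arbitrarily among the $V_j^{\ast}$, and let $G'$ be the $(r-1)$-partite graph on $V_1^{\ast},\dots,V_{r-1}^{\ast}$ retaining exactly the crossing edges of $G$. The symmetric difference $|E(G)\triangle E(G')|$ is bounded by within-class edges of $G$ (at most $o(m^2)L^2=o(n^2)$ by the reduced-graph stability), edges touching $V_0$ ($O(\eta n^2)$), and edges across irregular or low-density pairs ($O((\eta+d)n^2)$). Choosing $\eta,d,\gamma$ tending to $0$ sufficiently slowly yields the desired $o(n^2)$ edit distance and $|V_j^{\ast}|=n/(r-1)+o(n)$. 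The main obstacle is executing the reduced-graph stability argument cleanly, in particular the greedy $K_r$ extraction: one must ensure that the small set of ``bad'' vertices does not concentrate in a single class and block the selection, which is handled by first discarding those vertices and reapplying the switching argument on the remaining induced subgraph.
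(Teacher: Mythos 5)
The paper does not prove Theorem~\ref{ESthm} at all: it is quoted as a classical result of Erd\H{o}s and Simonovits with a citation, so there is no internal proof to compare against. Judged on its own, your sketch is a correct outline of the standard regularity-lemma route to stability, and the bookkeeping (reduced graph $K_r$-free by the embedding lemma, $e(R)\ge t_{r-1}(m)-\gamma m^2$, lifting the partition back and absorbing $V_0$, balancedness from strict maximality of the balanced Tur\'an partition) is all sound. The one structural criticism is that for cliques the regularity step is essentially redundant: your ``core technical step'' --- that a $K_r$-free graph on $m$ vertices with at least $t_{r-1}(m)-\gamma m^2$ edges is within $o(m^2)$ edges of being $(r-1)$-partite --- \emph{is} the theorem being proved, merely restated for $R$ instead of $G$. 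The max-cut-plus-greedy-embedding argument you sketch for it (discard the few vertices with many non-neighbours in a foreign class, then extract a $K_r$ from a surviving within-class edge) is a legitimate elementary proof, but it applies verbatim to $G$ itself, so the Regularity Lemma buys nothing here; it is genuinely needed only for the stability of general $r$-chromatic graphs $H$, where one reduces to the clique case. The original Erd\H{o}s--Simonovits proofs predate regularity and proceed elementarily (e.g.\ by deleting low-degree vertices and inducting on $r$ via neighbourhoods, which are dense and $K_{r-1}$-free). Your sketch also correctly identifies the genuine technical point --- that the ``bad'' vertices must be removed before running the switching/greedy argument --- so with that step written out carefully the proof would be complete, just longer than necessary.
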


Next, we recall the following result of Gy\H{o}ri \cite{EG88,EG91} about the existence of edge-disjoint copies of $K_r$ in graphs on $n$ vertices with more than $t_{r-1}(n)$ edges.

\begin{thm}\label{EGthm}\textup{\cite{EG88,EG91}}
For every $r\ge 3$ there is $C$ such that every graph $G$ with $n\ge C$ vertices and $e(G)=t_{r-1}(n)+m$ edges, where $m\le {n\choose 2}/C$, contains at least $m-Cm^2/n^2$ edge-disjoint copies of $K_r$.
\end{thm}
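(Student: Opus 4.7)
The plan is to reveal a near-Tur\'an $(r-1)$-partition of $G$ via the Erd\H os--Simonovits stability theorem (Theorem~\ref{ESthm}), concentrate the $m$ excess edges into \emph{internal} edges of that partition, and then greedily complete almost every internal edge into an edge-disjoint $K_r$ by a transversal through the other parts. Each extracted $K_r$ will use exactly one internal edge and $\binom{r}{2}-1$ cross-edges, so the only competition between the copies is over the cross-edges, and the task reduces to showing that not too many internal edges get ``blocked'' during the greedy process.

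Since $m\le\binom{n}{2}/C$ forces $m=o(n^2)$ for $C$ large, Theorem~\ref{ESthm} applied to $G$ yields an $(r-1)$-partition $V(G)=V_1\dcup\cdots\dcup V_{r-1}$ with $|V_i|=n/(r-1)+o(n)$, differing from $G$ in only $o(n^2)$ edges. Refine this partition by repeatedly moving a vertex $v\in V_i$ to a part $V_j$ whenever doing so strictly increases the number of cross-edges; at a local maximum one has $|N(v)\cap V_j|\ge|N(v)\cap V_i|$ for every $v\in V_i$ and every $j\ne i$, and the part sizes remain $n/(r-1)+o(n)$. Writing $E(G)=X\dcup I$ for the cross- and internal-edge sets of this refined partition, the graph on $X$ is $(r-1)$-partite and hence $|X|\le t_{r-1}(n)$, so $|I|\ge m$.

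For an internal edge $e=uv$ with $u,v\in V_i$, call any tuple $(w_j)_{j\ne i}$ with $w_j\in V_j$ and such that $\{u,v\}\cup\{w_j:j\ne i\}$ induces a $K_r$ in $G$ a \emph{completion} of $e$. The local optimality of the refined partition combined with the $o(n^2)$ edge-defect from stability gives at least $\Omega(n^{r-2})$ completions for every internal edge not incident to one of the $o(n)$ ``atypical'' vertices of abnormally low inter-part degree. Process the remaining internal edges in an arbitrary order; for each, pick a completion whose $\binom{r}{2}-1$ cross-edges have not yet been claimed, add the resulting $K_r$ to the packing, and mark those cross-edges as used. A single used cross-edge lies in at most $O(n^{r-4})$ completions of any fixed internal edge (two vertices are pinned by the internal edge, two more by the cross-edge, leaving $r-4$ free), so after $N$ successful extractions at most $O(Nn^{r-4})$ completions of any given internal edge are blocked. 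A double count over all of $I$ then bounds the number of internal edges with no surviving completion by $O(N^2/n^2)$, delivering at least $|I|-O(|I|^2/n^2)\ge m-Cm^2/n^2$ edge-disjoint copies of $K_r$ after absorbing constants.

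The main obstacle is obtaining the precise $Cm^2/n^2$ error with a single constant $C$ rather than $O(m^2/n^2)$ plus an additive $o(m)$ coming from the stability slack and the atypical-vertex discard. This requires sharper structural control than the raw stability theorem affords: the optimal partition's internal-edge count must be shown to equal $m+O(m^2/n^2)$ (not merely $m+o(n^2)$), and each typical internal edge must retain $(1-O(m/n^2))$ of its expected completions rather than merely $(1-o(1))$. A natural route is induction on $r$, using the base case $r=3$ together with the observation that a vertex of sufficiently large degree in $G$ spans a neighborhood whose $K_{r-1}$-excess is comparable to $m$; feeding this into the partition--greedy argument above should tighten the slack enough to yield the sharp error term for all $r\ge 3$.
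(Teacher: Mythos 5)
The paper does not prove this statement; it is quoted verbatim as a known theorem of Gy\H{o}ri \cite{EG88,EG91}, so your attempt has to be judged on its own merits, and it has a genuine gap --- one that you yourself flag in the final paragraph but do not close. The difficulty is not a technicality: every error term your argument generates is an additive $o(n^2)$ (the edge-defect in Theorem~\ref{ESthm}, the $o(n)$ atypical vertices you discard, the slack hidden in ``$\Omega(n^{r-2})$ completions''), whereas the theorem tolerates an error of only $Cm^2/n^2$. In the regime where $m$ is small this budget is smaller than $1$: for instance, with $m=2$ the theorem asserts that \emph{every} $n$-vertex graph with $t_{r-1}(n)+2$ edges contains \emph{two} edge-disjoint copies of $K_r$ once $n$ is large. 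A stability-based argument that writes off $o(n)$ vertices and $o(n^2)$ edges says nothing here, because the internal-edge set $I$ may have size exactly $m$, and discarding even one internal edge as ``atypical'' already exceeds the permitted loss. Your proposed repair --- induction on $r$ plus ``sharper structural control'' showing $|I|=m+O(m^2/n^2)$ and that each internal edge keeps a $(1-O(m/n^2))$-fraction of its completions --- is precisely the hard content of Gy\H{o}ri's theorem, not a routine tightening; stating that it ``should'' work is not a proof. (Gy\H{o}ri's actual argument is of a different character, proceeding by delicate degree-based reductions and exact counting rather than by stability.)

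There are also quantitative errors in the greedy step as written. A used cross-edge that shares a vertex with the internal edge $uv$ pins only three vertices of a completion, so it blocks up to $O(n^{r-3})$ completions of $uv$, not $O(n^{r-4})$; and for $r=3$ or $r=4$ the exponent $r-4$ is negative or zero and the stated count is meaningless. Since every extracted $K_r$ through $uv$ necessarily uses $2(r-2)$ cross-edges incident to $u$ or $v$, the $O(n^{r-3})$ term is the dominant one, and the claimed bound of $O(N^2/n^2)$ fully-blocked internal edges does not follow from the double count as described. Finally, note that $|I|\ge m$ is a lower bound only; if $|I|$ is much larger than $m$ (which the stability theorem permits, up to $m+o(n^2)$), the quantity $|I|-O(|I|^2/n^2)$ is not comparable to $m-Cm^2/n^2$ without further argument. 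None of these issues is cosmetic; together they mean the proposal establishes at best a weaker statement of the form ``at least $m-o(n^2)$ edge-disjoint copies,'' which is far from what Theorem~\ref{EGthm} asserts and is insufficient for the way the theorem is applied in the proof of Theorem~\ref{Krexact} (where the case $0<m<\binom{n}{2}/C$ requires more than $m/2$ copies for every such $m$, including bounded $m$).
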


Now, we will consider coverings and packings of cliques in graphs. Let $r\ge 3$ and $G$ be a graph.
Let $\mathcal K$ be the set of all $K_r$-subgraphs of $G$. A \emph{$K_r$-cover} is a set of edges of $G$ meeting all elements in $\mathcal K$, that is, the removal of a $K_r$-cover results in a $K_r$-free graph. A \emph{$K_r$-packing} in $G$ is a set of pairwise edge-disjoint copies of $K_r$. The \emph{$K_r$-covering number} of $G$, denoted by $\tau_r(G)$, is the minimum size of a $K_r$-cover of $G$, and the \emph{$K_r$-packing number} of $G$, denoted by $\nu_r(G)$, is the maximum size of a $K_r$-packing of $G$. 
Next, a \emph{fractional $K_r$-cover} of $G$ is a function $f:E(G)\rightarrow \mathbb{ R}_+$, such that $\sum_{e\in E(H)}f(e)\ge 1$ for every $H\in\mathcal K$, that is, for every copy of $K_r$ in $G$ the sum of the values of $f$ on its edges is at least $1$. A \emph{fractional $K_r$-packing} of $G$ is a function $p: \mathcal{K}\rightarrow \mathbb{ R}_+$ such that   $\sum_{H\in\mathcal K: e \in E(H)}p(H)\le 1$ for every $e\in E(G)$, that is, the total weight of $K_r$'s that cover any edge is at most 1. Here, $\mathbb{ R}_+$ denotes the set of non-negative real numbers. The \emph{fractional $K_r$-covering number} of $G$, denoted by $\tau_r^*(G)$, is the minimum of $\sum_{e\in E(G)} f(e)$ over all fractional $K_r$-covers $f$, and the \emph{fractional $K_r$-packing number} of $G$, denoted by $\nu_r^*(G)$, is the maximum of $\sum_{H\in \mathcal{K}} p(H)$ over all fractional $K_r$-packings $p$. 

One can easily observe that
\[
\nu_r(G) \leq \tau_r(G) \leq {r\choose 2}\nu_r(G).
\]

For $r=3$, we have $\tau_3(G) \leq 3\nu_3(G)$. A long-standing conjecture of Tuza \cite{ZT81} from 1981 states that this inequality can be improved as follows.

\begin{conj}\label{ZTconj}\textup{\cite{ZT81}}
For every graph $G$, we have $\tau_3(G) \leq 2\nu_3(G).$
\end{conj}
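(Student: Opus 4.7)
The plan is to combine linear-programming duality with a structural analysis of a maximum triangle packing. First, I would set up the natural triangle-cover/packing LP: by LP duality $\nu_3^\ast(G) = \tau_3^\ast(G)$, and the obvious chain reads $\nu_3(G) \le \nu_3^\ast(G) = \tau_3^\ast(G) \le \tau_3(G)$. Krivelevich's rounding argument already yields $\tau_3(G) \le 2\,\nu_3^\ast(G)$, so the conjecture reduces to controlling the integrality gap between $\nu_3(G)$ and $\nu_3^\ast(G)$: the hope is that any fractional packing of total weight $\nu_3^\ast(G)$ can be converted, after discarding a bounded fraction, into an integer packing of size at least $\tau_3(G)/2$.

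Second, I would work directly with a maximum integer packing $\mathcal{P}$ of size $\nu_3(G)$. By maximality every triangle of $G$ shares an edge with some $T\in\mathcal{P}$, so the $3\,\nu_3(G)$ edges used by $\mathcal{P}$ already form a (crude) $K_3$-cover. The target is to trim this cover to size $2\,\nu_3(G)$ by selecting, from each $T=abc\in\mathcal{P}$, exactly two of its three edges so that the resulting set still meets every triangle of $G$. Call an edge $e$ of $T$ \emph{active} if some triangle of $G$ other than $T$ uses $e$; the only obstruction to a local choice is a $T\in\mathcal{P}$ all three of whose edges are active, and then the external triangles at its three edges generate constraints that must be satisfied jointly.

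Third, I would encode the resulting selection problem on an auxiliary graph $H$ whose vertices are the triangles in $\mathcal{P}$ and whose edges correspond to external triangles of $G$ joining two members of $\mathcal{P}$. Choosing two edges per $T\in\mathcal{P}$ then becomes a constraint-satisfaction problem reminiscent of $2$-SAT on $H$. My aim would be to prove a valid assignment exists either via a suitable orientation or Eulerian-type decomposition of $H$, or by an exchange argument: whenever a local obstruction appears, swap some $T\in\mathcal{P}$ for a nearby triangle of $G$ while preserving maximality, and iterate until the induced $2$-SAT instance is satisfiable.

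The main obstacle — and the reason the conjecture has resisted proof since 1981 — is precisely the global consistency of these local choices. Two-edge-selection constraints at different members of $\mathcal{P}$ propagate along long chains of external triangles, and the resulting constraint hypergraph need not admit the naive $2$-coloring; the exchange steps above can create new obstructions elsewhere. The known partial results (Haxell's bound $\tau_3 \le (3-\tfrac{3}{23})\,\nu_3$, the tight fractional bound $\tau_3 \le 2\,\nu_3^\ast$, and the confirmed cases of planar, chordal, and $K_4$-free graphs) strongly suggest that any full proof will require a finer structural invariant than $\nu_3$, $\nu_3^\ast$ or $\tau_3^\ast$ alone — perhaps a weighted packing in which triangles are scored by their local connectivity within $H$ — and I would expect this step, rather than the LP setup or the local analysis, to be where a successful proof must find its real leverage.
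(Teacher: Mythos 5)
The statement you were asked about is Tuza's conjecture, and it is exactly that: a conjecture. The paper does not prove it and does not claim to; it cites it from the 1981 paper of Tuza and explicitly notes that it remains open. What the paper actually uses is the asymptotic relaxation $\tau_3(G)\le 2\nu_3(G)+o(n^2)$ (the $r=3$ case of Yuster's Theorem~\ref{RYthm}), obtained by combining LP duality $\tau_3^*=\nu_3^*$, Krivelevich's bound $\tau_3(G)\le 2\tau_3^*(G)$, and the Haxell--R\"odl result $\nu_3^*(G)=\nu_3(G)+o(n^2)$. Your first paragraph essentially rediscovers this chain, which is fine as far as it goes, but it proves only the version with the $o(n^2)$ error term, not the conjecture itself.

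The rest of your proposal is a research plan, not a proof, and you say so yourself: the reduction to choosing two edges from each triangle of a maximum packing so that the selection still covers all triangles is the well-known reformulation, and the ``global consistency of local choices'' that you flag as the main obstacle is precisely the open problem. No argument is given that the 2-SAT-like instance on your auxiliary graph $H$ is always satisfiable, nor that the exchange process terminates or preserves maximality without creating new obstructions; indeed, if such an argument existed it would settle a conjecture that has been open for over forty years. So there is a genuine and unavoidable gap: the key step is missing, and within the scope of this paper the honest answer is that the statement is cited, not proved. If your goal was to supply what the paper actually needs for Theorem~\ref{Krexact}, the correct target is the asymptotic inequality (\ref{yustereq}), for which your LP paragraph already sketches the standard proof.
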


Conjecture \ref{ZTconj} remains open although many partial results have been proved.  By using the earlier results of Krivelevich \cite{MK95}, and Haxell and R\"{o}dl \cite{HR01}, Yuster \cite{Y12} proved the following theorem which will be crucial to the proof of Theorem \ref{Krexact}. In the case $r=3$, it is an asymptotic solution of Tuza's conjecture.

\begin{thm}\label{RYthm}\textup{\cite{Y12}}
Let $r\ge 3$ and $G$ be a graph on $n$ vertices. Then 
\begin{equation}
\tau_r(G)\le \Big\lfloor\frac{r^2}{4}\Big\rfloor\nu_r(G)+o(n^2).\label{yustereq}
\end{equation}
\end{thm}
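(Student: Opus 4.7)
The plan is to combine three classical ingredients: LP duality for packing and covering, the Haxell--R\"odl near-perfect packing theorem for uniform hypergraphs with bounded codegrees, and a threshold-rounding of fractional covers in the spirit of Krivelevich.

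First, I would exploit LP duality on the $\binom{r}{2}$-uniform ``clique hypergraph'' $\mathcal{K}$ whose vertex set is $E(G)$ and whose hyperedges are the edge sets of $K_r$-subgraphs of $G$; this gives $\tau_r^*(G)=\nu_r^*(G)$. Next, I would apply the Haxell--R\"odl semi-random nibble to $\mathcal{K}$: any two edges of $G$ lie in at most $O(n^{r-3})$ copies of $K_r$ while the typical edge-degree in $\mathcal{K}$ is of order $n^{r-2}$, so the codegree-to-degree ratio is $O(1/n)=o(1)$, and the theorem produces an integer $K_r$-packing of size $\nu_r^*(G)-o(n^2)$. Combining,
\[
\tau_r^*(G)=\nu_r^*(G)\le \nu_r(G)+o(n^2).
\]

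It therefore remains to establish the rounding bound
\[
\tau_r(G)\le \Big\lfloor \frac{r^2}{4}\Big\rfloor\,\tau_r^*(G)+o(n^2).
\]
Starting from an optimal fractional cover $f:E(G)\to\mathbb{R}_+$ of total weight $\tau_r^*(G)$, I would take into the integer cover every edge of weight exceeding a threshold $\theta\approx 1/\lfloor r^2/4\rfloor$; this produces a set of size at most $\lfloor r^2/4\rfloor\,\tau_r^*(G)$. The copies of $K_r$ not yet hit are those on which $f$ places only small weights, which constrains their edge-weight profile to be nearly uniform; using the Szemer\'edi regularity lemma together with the Erd\H{o}s--Simonovits stability theorem (Theorem~\ref{ESthm}), one shows that the union of such residual $K_r$'s concentrates in a structure close to Tur\'an-partite, which can then be covered by $o(n^2)$ additional edges via Theorem~\ref{EGthm}.

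The main obstacle is obtaining the sharp constant $\lfloor r^2/4\rfloor$ rather than the trivial $\binom{r}{2}$ in the rounding step. This constant is tight---it matches the maximum number of edges of $K_r$ that span a bipartite graph, and is achieved by Tur\'an-type extremal configurations---so the argument cannot be purely greedy: it must locally exploit, within each ``hard'' $K_r$, an edge-maximum bipartite partition to cover the clique efficiently. I expect this stability-based case analysis, distinguishing cliques that look like bipartite-dominated Tur\'an extremal pieces from generic ones, to be the technical core of the proof, and to be precisely the point at which the constant $\lfloor r^2/4\rfloor$ (as opposed to anything larger) enters the bound.
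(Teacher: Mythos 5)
Your first two ingredients coincide exactly with what the paper records (equations (\ref{eq:duality}) and (\ref{eq:HR})): LP duality gives $\tau_r^*(G)=\nu_r^*(G)$, and Haxell--R\"odl gives $\nu_r^*(G)\le\nu_r(G)+o(n^2)$. Note that the paper does not prove the theorem in general --- it cites Yuster --- but it does reproduce a complete proof of the $r=3$ case inside the proof of Lemma~\ref{lm:new}, via Krivelevich's bound $\tau_3(H)\le 2\tau_3^*(H)$, Lemma~\ref{lm:K}, and the iterative cover-building algorithm. So the place where your argument must carry the weight is the deterministic rounding step $\tau_r(G)\le\lfloor r^2/4\rfloor\,\tau_r^*(G)+o(n^2)$, and that is where your proposal breaks.

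The threshold rounding at $\theta\approx 1/\lfloor r^2/4\rfloor$ does not work, and the residual cliques cannot in general be covered by $o(n^2)$ extra edges. Take $H=K_n$ and the uniform fractional cover $f(e)=1/\binom{r}{2}$; this is a \emph{minimum} fractional cover (the uniform fractional packing certifies $\nu_r^*(K_n)=\binom{n}{2}/\binom{r}{2}$), and since $1/\binom{r}{2}<1/\lfloor r^2/4\rfloor$ for all $r\ge 3$, no edge clears your threshold. Every copy of $K_r$ is then ``residual'', and $\tau_r(K_n)=\binom{n}{2}-t_{r-1}(n)=\Omega(n^2)$, so no $o(n^2)$ clean-up (via regularity, stability, or Theorem~\ref{EGthm}) can finish the job. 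The actual mechanism behind the constant is not a one-shot rounding but an iterative exchange driven by complementary slackness, as in Lemma~\ref{lm:K}: either every edge of $H$ carries positive weight, in which case the optimal dual packing saturates every edge, forcing $\tau_r^*(H)=e(H)/\binom{r}{2}$, and one covers $H$ by the complement of a dense multipartite subgraph; or some clique contains a zero-weight edge, in which case one moves a batch of edges into the integer cover while provably decreasing $\tau_r^*$ by a proportional amount, and iterates. For $r=3$ this yields the factor $2=\lfloor 9/4\rfloor$ from the ``remove $2\ell$ edges, decrease $\tau_3^*$ by $\ell$'' step; the general $r$ case is Yuster's (nontrivial) extension of this scheme, not a stability argument. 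As written, your third step is a genuine gap that a counterexample defeats.
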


We now prove the following lemma which states that a graph $G$ with $n$ vertices and at least $t_{R-1}(n)+\Omega(n^2)$ edges falls quite short of being optimal.

\begin{lm}\label{lm:new} 
For every $k\ge 2$ and $c_0>0$ there are $c_{1}>0$ and $n_0$ such that for every graph $G$ of order $n\ge n_0$ with at least $t_{R-1}(n)+c_0 n^2$ edges, we have $\phi_k(G,\mathcal{C})\le t_{R-1}(n)-c_{1}n^2$. 
\end{lm}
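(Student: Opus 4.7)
The starting identity is
\[
\phi_k(G,\mathcal C)\;=\;e(G)\;-\;\sum_{i=1}^{k}\Bigl(\binom{r_i}{2}-1\Bigr)\nu_{r_i}(G_i),
\]
where $G_i$ is the colour-$i$ spanning subgraph of $G$: monochromatic $K_{r_i}$-packings in distinct colours are automatically edge-disjoint, so an optimal monochromatic $\mathcal C$-decomposition packs each colour independently and declares the leftover edges singletons. Writing $d:=e(G)-t_{R-1}(n)\ge c_0 n^2$, the lemma reduces to $\sum_i(\binom{r_i}{2}-1)\nu_{r_i}(G_i)\ge d+c_1 n^2$. The first ingredient is the Ramsey--Tur\'an bound $\sum_i\tau_{r_i}(G_i)\ge d$: after simultaneously deleting a minimum $K_{r_i}$-cover from each $G_i$ the remaining coloured graph carries no monochromatic $K_{r_i}$ in any colour, hence no $K_R$ by the definition of $R$, hence at most $t_{R-1}(n)$ edges by Tur\'an. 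Theorem~\ref{RYthm} applied per colour then yields, with $c_i:=(\binom{r_i}{2}-1)/\lfloor r_i^2/4\rfloor\ge 1$,
\[
\sum_{i=1}^{k}\Bigl(\binom{r_i}{2}-1\Bigr)\nu_{r_i}(G_i)\;\ge\;\sum_{i=1}^{k}c_i\,\tau_{r_i}(G_i)-o(n^2)\;\ge\;d-o(n^2),
\]
and a direct check shows $c_i>1$ iff $r_i\ge 4$. Whenever some colour with $r_i\ge 4$ contributes $\tau_{r_i}(G_i)=\Omega(n^2)$ the required $c_1 n^2$ of slack is already automatic.

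The hard case is when the above Yuster estimate is near-tight, in particular when all $r_i=3$ (so every $c_i=1$). I would then split on the size of~$d$. For the small-excess regime $c_0 n^2\le d\le n^2/(4C)$, with $C$ the constant of Theorem~\ref{EGthm} at $r=R$, I would apply Gyori's theorem directly to $G$: it produces at least $d(1-Cd/n^2)\ge 3d/4$ edge-disjoint copies of $K_R$, each containing a monochromatic $K_{r_i}$ (by the definition of $R$) and so saving at least $\binom{r_1}{2}-1\ge 2$; the total saving is $\ge 3d/2=d+d/2\ge d+c_0 n^2/2$, settling this regime with $c_1=c_0/2$. For the large-excess regime $d>n^2/(4C)$ I would split further: if $\sum_i\tau_{r_i}(G_i)\ge d+\epsilon n^2$ for a pre-chosen small $\epsilon>0$ then the Yuster display already delivers savings $\ge d+\epsilon n^2/2$; otherwise deleting the minimum covers leaves a $K_R$-free graph with at least $t_{R-1}(n)-\epsilon n^2$ edges, and Theorem~\ref{ESthm} forces it to be $o(n^2)$-close to a Tur\'an graph $T_{R-1}(n)$ on some partition $V_1,\dots,V_{R-1}$. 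Consequently $G$ is $o(n^2)$-close to $T_{R-1}(n)+D$ with $|D|\approx d$ cover edges lying inside the classes, and the residual colouring on the Tur\'an part is forced to be the blow-up of a $k$-colouring $f'$ of $K_{R-1}$ avoiding every monochromatic $K_{r_j}$. Each cover edge $uv$ of colour $j$ in a class $V_p$ then extends to a monochromatic $K_{r_j}$ by adjoining $r_j-2$ further vertices whose classes form a colour-$j$ clique through $u_p$ in $f'$; a greedy packing makes $(1-o(1))|D|$ of these extensions edge-disjoint, for total savings $\ge(2-o(1))d\ge d+c_0 n^2/2$. The final $c_1$ is the minimum of the constants produced above.

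\textbf{Main obstacle.} When $r_1=\dots=r_k=3$, Theorem~\ref{RYthm} is asymptotically sharp (e.g.\ on disjoint copies of $K_4$), so it alone yields only $d-o(n^2)$ of saving and cannot by itself produce the strict improvement $+c_1 n^2$. The extra slack must therefore come from Theorem~\ref{EGthm} (small-excess regime) or from the stability-plus-extension argument (large-excess regime). The chief technical subtlety is in the extension step: one must verify that $(1-o(1))|D|$ extensions can be packed edge-disjointly and control any cover edges in an exceptional class whose corresponding vertex in $K_{R-1}$ has no colour-$j$ neighbour in $f'$. Such edges are confined to a single class of a single colour and so contribute only $O(n^2/(R-1)^2)$ edges, which are absorbed by a final application of Theorem~\ref{EGthm} to a suitably chosen subgraph.
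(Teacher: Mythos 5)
Your preliminary reductions are sound and coincide with the paper's: for a fixed colouring the savings are $\sum_i\bigl(\binom{r_i}{2}-1\bigr)\nu_{r_i}(G_i)$, the cover bound $\sum_i\tau_{r_i}(G_i)\ge d$ follows from Ramsey plus Tur\'an, Theorem~\ref{RYthm} converts covers into packings, and your small-excess regime via Theorem~\ref{EGthm} is correct (it mirrors how the paper uses Gy\H{o}ri's theorem in the proof of Theorem~\ref{Krexact} itself). The genuine gap is in the hard branch (large $d$, covers of total size $\approx d$, all weight in triangle colours): the assertion that the $K_R$-free remainder, being close to $T_{R-1}(n)$, carries a colouring ``forced to be the blow-up of a $k$-colouring $f'$ of $K_{R-1}$'' is unjustified. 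Theorem~\ref{ESthm} is a statement about the uncoloured graph only; all you actually know is that each colour class of the remainder is $K_{r_i}$-free, and upgrading that to (even an approximate) blow-up of a single extremal Ramsey colouring would be a separate colour-stability theorem that neither your sketch nor the paper's toolkit provides --- for general tuples $(r_1,\dots,r_k)$ the extremal colourings of $K_{R-1}$ are not unique or well understood. The later steps inherit this gap: extending a within-class cover edge of colour $j$ needs a colour-$j$ clique $K_{r_j-1}$ through the corresponding vertex of $f'$; your claim that failures are ``confined to a single class of a single colour'', hence only $O(n^2/(R-1)^2)$ edges, has no justification (a priori a constant fraction of the $\approx d$ within-class edges could have a bad colour/class combination), and the fallback ``absorbed by a final application of Theorem~\ref{EGthm} to a suitably chosen subgraph'' is not an argument: Gy\H{o}ri's theorem needs an explicit supersaturated graph and gives no control of edge-disjointness with the extensions already chosen. (Also, the deleted cover need not lie inside the classes; only roughly $d$ of its edges do, up to the stability error, and the greedy edge-disjoint packing of extensions is itself only asserted.)

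For contrast, the paper's proof never needs any global structure of the colouring. It argues by contradiction: near-optimality forces the Krivelevich bound $\tau_3\le 2\tau_3^*$ to be essentially tight in each triangle colour, and unfolding its proof (Lemma~\ref{lm:K}) into an algorithm shows that all but $o(n^2)$ cover edges are ``children'' of weight-zero ``parent'' edges. A purely local Ramsey argument then shows no parent pair $y_0y_1$ can lie inside one part: completing it by one vertex from each other part gives a clique of order $R$ in the remainder whose only possible monochromatic clique is a triangle through $y_0y_1$, and its two other edges survive in $G_i'$, contradicting the algorithm having processed $y_0y_1$ as a parent. Hence at least half of each cover consists of cross-part edges missing from the remainder, so $|M|=\Omega(n^2)$, contradicting stability. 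Note that this duplication-style Ramsey step is precisely what would dispose of your ``exceptional class'' worry in the exact blow-up setting (a vertex of $f'$ lacking a colour-$j$ $K_{r_j-1}$ could be duplicated to beat the Ramsey number $R$), but your write-up neither proves the blow-up structure nor makes that argument, so the large-excess case of the lemma remains unproved as written.
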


\begin{proof}
Suppose that the lemma is false, that is, there is $c_0>0$ such that for some increasing sequence
of $n$ there is a graph $G$ on $n$ vertices with $e(G)\ge t_{R-1}(n)+c_0 n^2$ and $\phi_k(G,\mathcal{C} )\ge t_{R-1}(n)+o(n^2)$. Fix a $k$-edge-colouring of $G$ and, for $1\leq i\leq k$, let $G_i$ be the subgraph of $G$ on $n$ vertices that contains all edges with colour $i$.  

Let $m=e(G)-t_{R-1}(n), $  and let  $s\in\{0,\dots,k\}$ be the maximum such that $$r_1=\dots=r_s=3.$$

Let us very briefly recall the argument from \cite{LS13} that shows $\phi_k(G,\mathcal{C})\le t_{R-1}(n)+o(n^2)$, adopted to our purposes. If we remove a $K_{r_i}$-cover from $G_i$ for every $1\le i\le k$, then we destroy
all copies of $K_R$ in $G$. By Tur\'an's theorem, at most $t_{R-1}(n)$ edges remain. Thus, 
 \begin{equation}\label{eq:tauG}
  \sum_{i=1}^k \tau_{r_i}(G_i)\ge m.
   \end{equation}
   
By Theorem \ref{RYthm}, if we decompose $G$ into a maximum $K_{r_i}$-packing in each
$G_i$ and the remaining edges, we obtain that 
 \begin{eqnarray}
  \phi_k(G,\mathcal{ C})&\le& e(G)-\sum_{i=1}^k \left({r_i\choose 2}-1\right)\nu_{r_i}(G_i)\nonumber\\
   &\le& t_{R-1}(n) +m -   
   \sum_{i=1}^k \frac{{r_i\choose 2}-1}{\lfloor r_i^2/4\rfloor}\, \tau_{r_i}(G_i) +o(n^2)\label{eq:main} \\
    &\le & t_{R-1}(n) +m-\sum_{i=1}^k \tau_{r_i}(G_i)-\frac14\sum_{i=s+1}^k \tau_{r_i}(G_i)+o(n^2)\ \le\ t_{R-1}(n) +o(n^2). \nonumber
   \end{eqnarray}
   The third inequality holds since   $({r\choose 2}-1)/\lfloor r^2/4\rfloor\ge 5/4$ for $r\ge 4$ and is equal to $1$ for $r=3$.

Let us derive a contradiction from this by looking at the properties of our hypothetical counterexample $G$. First, all inequalities that we saw have to be equalities within an additive term $o(n^2)$. In particular, the slack in (\ref{eq:tauG}) is $o(n^2)$, that is,
 \begin{equation}\label{eq:tauGEq}
  \sum_{i=1}^k \tau_{r_i}(G_i)= m+o(n^2).
   \end{equation}
   
Also, $\sum_{i=s+1}^k \tau_{r_i}(G_i)=o(n^2)$. In particular, we have that $s\ge 1$. To simplify the later calculations, let us re-define $G$ by removing a maximum $K_{r_i}$-packing from $G_i$ for each $i\ge s+1$. The new graph is still a counterexample to the lemma if we decrease $c_0$ slightly,
since the number of edges removed is at most $\sum_{i=s+1}^k {r_i\choose 2}\tau_{r_i}(G_i)=o(n^2)$.

Suppose that we remove, for each $i\le s$, an arbitrary (not necessarily minimum) $K_3$-cover $F_i$ from $G_i$ such that
 \begin{equation}\label{eq:sumFi}
  \sum_{i=1}^s |F_i|\le m+o(n^2).
   \end{equation}
   
Let $G'\subseteq G$ be the obtained $K_R$-free graph. (Recall that we assumed that $G_i$ is $K_{r_i}$-free for all $i\ge s+1$.) Let $G_i'\subseteq G_i$ be the colour classes of $G'$. We know by (\ref{eq:sumFi})
that $e(G')\ge t_{R-1}(n)+o(n^2)$. Since $G'$ is $K_R$-free, we conclude by the Stability Theorem (Theorem \ref{ESthm}) that there is a partition $V(G)=V(G')=V_1\,\dot\cup\,\dots\,\dot\cup\, V_{R-1}$ such that
\begin{equation}\label{eq:diff} 
 \forall\,  i \in \{1,\ldots, R-1\} , \quad  |V_i|=\frac n{R-1}+o(n) \qquad  \mbox{and}\qquad  |E(T)\setminus E(G')|=o(n^2),
\end{equation}
  where $T$ is the complete $(R-1)$-partite graph with parts $V_1,\dots,V_{R-1}$.
  
Next, we essentially expand the proof of (\ref{yustereq}) for $r=3$ and transform it into an algorithm
that produces $K_3$-coverings $F_i$ of $G_i$, with $1\le i\le s$, in such a way that  (\ref{eq:sumFi}) holds 
but (\ref{eq:diff}) is impossible whatever $V_1,\dots,V_{R-1}$ we take, giving the desired contradiction.

Let $H$ be an arbitrary graph of order $n$. By the LP duality, we have that 
 \begin{equation}\label{eq:duality}
  \tau_r^*(H)=\nu_r^*(H).
   \end{equation}
   
By the result of Haxell and R\"odl  \cite{HR01} we have that
 \begin{equation}\label{eq:HR}
  \nu_r^*(H)=\nu_r(H)+o(n^2).
   \end{equation}
   
   Krivelevich \cite{MK95} showed that
 \begin{equation}\label{eq:tauK3}
  \tau_3(H)\le 2\tau_3^*(H).
   \end{equation}
   
Thus, $\tau_3(H)\le 2\nu_3(H)+o(n^2)$ giving  (\ref{yustereq}) for $r=3$.  
  
The proof of   Krivelevich \cite{MK95} of (\ref{eq:tauK3}) is based on
the following result.

\begin{lm}\label{lm:K} Let $H$ be an arbitrary graph and $f:E(H)\rightarrow \mathbb{ R}_+$ be a minimum
fractional $K_3$-cover. Then $\tau_3(H)\le \frac32\, \tau_3^*(H)$ or there is
$xy\in E(H)$ with $f(xy)=0$ that belongs to at least one triangle of $H$.
\end{lm}

\begin{proof} If there is an edge $xy\in E(H)$ that does not belong to a triangle,
 then necessarily $f(xy)=0$
and $xy$ does not belong to any optimal fractional or integer $K_3$-cover. We can remove $xy$ from $E(H)$
without changing the validity of the lemma. Thus, we can assume that every edge of $H$ belongs to
a triangle. 

Suppose that $f(xy)>0$ for every edge $xy$ of $H$, for otherwise we are done. Take a maximum
fractional $K_3$-packing $p$. Recall that it is a function that assigns a weight $p(xyz)\in \mathbb{R}_+$ to each triangle $xyz$ of $H$ such that for every edge $xy$ the sum of weights over all $K_3$'s of $H$ containing $xy$ is at most 1, that is, 
  \begin{equation}\label{eq:xy}
  \sum_{z\in \Gamma(x)\cap \Gamma(y)} p(xyz)\le 1, 
   \end{equation}
where $\Gamma(v)$ denotes the set of neighbours of the  vertex $v$ in $H$. 

  This is the dual LP to the minimum fractional $K_3$-cover problem. 
By the complementary slackness condition (since $f$ and $p$ are optimal solutions), 
we have equality in (\ref{eq:xy}) for every $xy\in E(H)$.  This and the LP duality
imply that 
 $$\tau_3^*(H)=
  \nu_3^*(H)=\sum_{\mathrm{triangle\ }xyz} p(xyz)=\frac13 \sum_{xy\in E(H)} \sum_{z\in \Gamma(x)\cap \Gamma(y)} p(xyz) = \frac13 e(H).
 $$
 
 On the other hand $\tau_3(H)\le \frac12\, e(H)$: take a bipartite subgraph of $H$ with
at least half of the edges; then the remaining edges form a $K_3$-cover. Putting
the last two inequalities together, we obtain the required result.
\end{proof}

Let $1\le i\le s$. We now describe an algorithm for finding a $K_3$-cover $F_i$ in $G_i$. Initially, let $H=G_i$ and $F_i=\emptyset$. Repeat the following. 

Take a minimum fractional $K_3$-cover $f$ of $H$.
If the first alternative of Lemma~\ref{lm:K} is true, pick a $K_3$-cover of $H$ of size
at most $\frac32\, \tau_3^*(H)$, add it to $F_i$ and stop. Otherwise, fix some edge $xy\in E(H)$
returned by Lemma~\ref{lm:K}. Let $F'$ consist of all pairs $xz$ and $yz$ over $z\in \Gamma(x)\cap \Gamma(y)$. Add $F'$ to $F_i$ and remove $F'$ from $E(H)$. Repeat the whole step (with the
new $H$ and $f$).

Consider any moment during this algorithm, when we had $f(xy)=0$ for some edge $xy$ of $H$.
Since $f$ is a fractional $K_3$-cover, we have that $f(xz)+f(yz)\ge 1$ for every $z\in \Gamma(x)\cap \Gamma(y)$. Thus, if $H'$ is obtained from $H$ by removing $2\ell$ such pairs, where $\ell=|\Gamma(x)\cap \Gamma(y)|$, then $\tau_3^*(H')\le \tau_3^*(H)-\ell$ because $f$ when restricted to $E(H')$ is
still a fractional cover (although not necessarily an optimal one). Clearly, $|F_i|$ increases 
by $2\ell$ during this operation. Thus, indeed we obtain, at the end, a $K_3$-cover $F_i$ of $G_i$
of size at most $2\tau_3^*(G_i)$. 

Also, by (\ref{eq:duality}) and (\ref{eq:HR}) we have that 
$$\sum_{i=1}^s |F_i|\le 2 \sum_{i=1}^s \nu_3(G_i) +o(n^2).$$

Now, since all slacks in (\ref{eq:main}) are $o(n^2)$, we conclude that
$$\sum_{i=1}^s\nu_3(G_i)\le \frac m2+o(n^2)$$ and (\ref{eq:sumFi}) holds. In fact,  (\ref{eq:sumFi}) is
equality by (\ref{eq:tauGEq}).

Recall that $G_i'$ is obtained from $G_i$ by removing all edges of $F_i$ and $G'$ is
the edge-disjoint union of the graphs $G_i'$.  Suppose that there exist $V_1,\dots,V_{R-1}$ 
satisfying (\ref{eq:diff}).
Let $M=E(T)\setminus E(G')$ consist of \emph{missing} edges. Thus, $|M|=o(n^2)$.

Let 
 $$X=\{x\in V(T) \mid \deg_M(x)\ge c_2 n\},
  $$ 
  where we define $c_2=(4(R-1))^{-1}$. Clearly, 
$$|X|\le 2|M|/c_2n=o(n).$$

Observe that, for every $1\le i\le s$, if the first alternative of Lemma~\ref{lm:K} holds at some point,
then the remaining graph $H$ satisfies $\tau_3^*(H)=o(n^2)$. Indeed,  otherwise by $\tau_3(G_i)\le 2\tau_3^*(G_i)-\tau^*_3(H)/2+o(n^2)$ we get a strictly
smaller constant than $2$ in (\ref{eq:tauK3}) and thus a gap of $\Omega(n^2)$ in (\ref{eq:main}),
a contradiction. Therefore, all but $o(n^2)$ edges in $F_i$ come from some \emph{parent edge} $xy$ that had $f$-weight 0 at some point.

When our algorithm adds pairs
$xz$ and $yz$ to $F_i$ with the same parent $xy$, then it adds the same number of pairs incident to $x$ as those incident to
$y$. Let $\mathcal P$ consist
of pairs $xy$ that are disjoint from $X$ and were a parent edge during the run of the algorithm.
Since the total number of pairs in $F_i$ incident to $X$ is at most $n|X|=o(n^2)$, there are
$|F_i|-o(n^2)$ pairs in $F_i$ such that their parent is in $\mathcal P$.

Let us show that $y_0$ and $y_1$ belong to different parts $V_j$ for every pair $y_0y_1\in \mathcal P$. Suppose on the contrary that, say, $y_0,y_1\in V_1$.
 For each $2\le j\le R-1$ pick an arbitrary
$y_j\in V_j\setminus (\Gamma_M(y_0)\cup \Gamma_M(y_1))$. Since $y_0,y_1\not\in X$, the possible 
number of choices for $y_j$ is at least 
$$\frac n{R-1}-2c_2n+o(n)\ge \frac n{R-1}-3c_2n. $$

 Let 
 $$
  Y=\{y_0,\dots,y_{R-1}\}.
   $$
   
   By the above, we have at least $(\frac n{R-1}-3c_2n)^{R-2}=\Omega(n^{R-2})$ choices of $Y$. Note that by the definition,
all edges between $\{y_0,y_1\}$ and the rest of $Y$ are present in $E(G')$.   
Thus, the number of sets $Y$ containing at least one edge of $M$ different from
$y_0y_1$ is at most
 $$
  |M| \times n^{R-4}=o(n^{R-2}).
   $$
  This is $o(1)$ times the number of choices of $Y$. Thus, for almost every $Y$, $H=G'[Y]$ is a clique
(except perhaps the pair $y_0y_1$). In particular, there is at least one such choice of $Y$;
fix it. Let $i\in \{1,\ldots, k\}$ be arbitrary. Adding back the pair $y_0y_1$ coloured $i$ to  $H$ (if it is not there already),
we obtain a $k$-edge-colouring of the complete graph $H$ of order $R$. By the definition of 
$R=R(r_1,\dots,r_k)$,
there must be a monochromatic triangle on $abc$ of colour $h\le s$. (Recall that we assumed at the
beginning that $G_j$ is $K_{r_j}$-free for each $j>s$.) But $abc$ has to contain an edge from 
the $K_3$-cover $F_h$, say $ab$. This edge $ab$ is not in $G'$ (it was removed from $G$).
If $a,b$ lie in different parts $V_j$, then $ab\in M$, a contradiction to the choice of $Y$.
The only possibility is that $ab=y_0y_1$. Then $h=i$. Since both $y_0c$ and $y_1c$ are in $G_i'$,
they were never added to the $K_3$-cover $F_i$ by our algorithm. Therefore, $y_0y_1$
was never a parent, which is the desired contradiction.

Thus, every  $xy\in\mathcal P$ connects two different parts $V_j$. For every such
parent $xy$, the number
of its children in $M$ is at least half of all its children. Indeed, for every pair of children $xz$ and $yz$, at least one connects two different parts; this child necessarily belongs to $M$. Thus,
 \begin{equation*}\label{eq:aim}
|F_i\cap M|\ge \frac12\,|F_i|+o(n^2).
\end{equation*}
 (Recall that parent edges that intersect $X$ produce at most $2n|X|=o(n^2)$ children.) Therefore,
 $$
  |M|\ge \frac12\, \sum_{i=1}^s |F_i|+o(n^2)\ge \frac m2+o(n^2)=\Omega(n^2),
   $$ 
    contradicting
 (\ref{eq:diff}).  This contradiction proves Lemma~\ref{lm:new}.
 \end{proof}

We are now able to prove Theorem \ref{Krexact}.

\begin{proof}[Proof of the upper bound in Theorem \ref{Krexact}]
Let $C$ be the constant returned by Theorem \ref{EGthm} for $r=R$.  Let $n_0=n_0(r_1,\dots,r_k)$ be sufficiently large to satisfy all the inequalities we will encounter. Let  $G$ be a $k$-edge-coloured graph on $n\geq n_0$ vertices. We will show that $\phi_k(G,\mathcal C) \leq t_{R-1}(n)$ with equality if and only if $G =T_{R-1}(n)$, and $G$ does not contain a monochromatic copy of $K_{r_i}$ in colour $i$ for every $1\le i\le k$.
 
Let $e(G)=t_{R-1}(n)+m$, where $m$ is an integer. If $m< 0$, we can decompose $G$  into single edges and there is nothing to prove. 

Suppose $m=0$. If $G$ contains a monochromatic copy of $K_{r_i}$ in colour $i$ for some $1\le i\le k$, then $G$ admits a monochromatic $\mathcal C$-decomposition with at most $t_{R-1}(n)- {r_i \choose 2 }+1<t_{R-1}(n)$ parts and we are done. Otherwise, the definition of $R$ implies that $G$ does not contain a copy of $K_{R}$. Therefore,
 $G=T_{R-1}(n)$ by Tur\'{a}n's theorem and $\phi_k(G,\mathcal C)=t_{R-1}(n)$ as required.

Now suppose $m>0$. We can also assume that $m< {n\choose 2}/C$ for otherwise we are done: $\phi_k(G,\mathcal C)<t_{R-1}(n)$ by Lemma \ref{lm:new}. Thus, by Theorem \ref{EGthm}, the graph $G$ contains at least $m-Cm^2/n^2>\frac{m}{2}$ edge-disjoint copies of $K_{R}$. Since each $K_{R}$ contains a monochromatic copy of $K_{r_i}$ in the colour-$i$ graph $G_i$, for some $1\le i\le k$, we conclude that $\sum_{i=1}^k \nu_{r_i}(G_i) >\frac{m}{2}$, so that $\sum_{i=1}^k({r_i\choose 2}-1)\nu_{r_i}(G_i)\ge \sum_{i=1}^k 2\nu_{r_i}(G_i)>m$. We have
\begin{equation*}
\phi_k(G,\mathcal C) = e(G)-\sum_{i=1}^k{r_i \choose 2 }\nu_{r_i}(G_i) + \sum_{i=1}^k\nu_{r_i}(G_i)< t_{R-1}(n),
\end{equation*}
giving the required. 
\end{proof}

\noindent\textbf{Remark.} By analysing the above argument, one can also derive the following stability property for every fixed family $\mathcal C$ of cliques as $n\to\infty$: every graph $G$ on $n$ vertices with $\phi_k(G,\mathcal{C})=t_{R-1}(n)+o(n^2)$ is $o(n^2)$-close to the Tur\'an graph $T_{R-1}(n)$ in the edit distance.%

\section*{Acknowledgements}

Henry Liu and Teresa Sousa acknowledge the support from FCT - Funda\c c\~ao para a Ci\^encia e a Tecnologia (Portugal), through the projects PTDC/MAT/113207/2009 and PEst-OE/MAT/UI0297/2011 (CMA). 
Oleg Pikhurko was supported by ERC grant~306493 and EPSRC grant~EP/K012045/1.

The authors thank the anonymous referees for the careful reading of the manuscript.

\bibliography{mono-decompositions-bib}
\bibliographystyle{abbrv}

\end{document}